\documentclass[11pt,a4paper]{article}
\usepackage[margin=2.5cm]{geometry}
\usepackage{amsmath,amssymb,amsthm,mathtools}
\usepackage{enumitem}
\usepackage{booktabs}
\usepackage{array}
\usepackage{xcolor}
\usepackage{algorithm}
\usepackage{algpseudocode}
\usepackage[colorlinks=true,linkcolor=blue!60!black,citecolor=green!40!black,urlcolor=blue!60!black]{hyperref}
\usepackage{float}
\usepackage[numbers,sort&compress]{natbib}
\usepackage{cleveref}
\allowdisplaybreaks
\usepackage{authblk}

\newtheorem{theorem}{Theorem}[section]
\newtheorem{proposition}{Proposition}[section]
\newtheorem{lemma}{Lemma}[section]
\newtheorem{corollary}{Corollary}[section]

\theoremstyle{definition}
\newtheorem{definition}{Definition}[section]
\newtheorem{example}{Example}[section]
\theoremstyle{remark}
\newtheorem{remark}{Remark}[section]
\newtheorem{assumption}{Assumption}

\numberwithin{equation}{section}

\newcommand{\eps}{\varepsilon}
\newcommand{\E}{\mathbb{E}}
\newcommand{\PP}{\mathbb{P}}
\newcommand{\QQ}{\mathbb{Q}}
\newcommand{\R}{\mathbb{R}}
\newcommand{\dd}{\mathrm{d}}
\newcommand{\1}{\mathbf{1}}
\newcommand{\TV}{\operatorname{TV}}
\newcommand{\Var}{\operatorname{Var}}
\newcommand{\Cov}{\operatorname{Cov}}
\newcommand{\sgn}{\operatorname{sgn}}
\newcommand{\Leb}{\operatorname{Leb}}
\newcommand{\dist}{\operatorname{dist}}
\newcommand{\osc}{\operatorname{osc}}

\newcommand{\bb}{\bar b}
\newcommand{\drift}{\mathfrak{b}}
\newcommand{\vth}{\vartheta_\eps}
\newcommand{\Hol}{\mathcal{C}^{0,\alpha}}
\newcommand{\BVJ}{\mathrm{BV}_{\!\mathrm J}}
\newcommand{\Cusp}{\mathcal{K}^{\alpha}_{S}}
\newcommand{\Phibar}{\overline{\Phi}}
\newcommand{\seg}[2]{[\![#1,#2]\!]}

\title{Control variates for dry friction under colored noise:\\[2pt]
variance rates for non-smooth observables}
\author{Lihong Guo \thanks{Email: lihguo2223@cityu.edu.cn}} 
\affil{\small\it City University of Hong Kong Shenzhen Research Institute, Shenzhen, China}
\date{}

\begin{document}
\maketitle

\begin{abstract}
A control variate method was introduced by Garnier and Mertz (2022) 
for multivalued stochastic differential equations driven by colored noise. 
For smooth observables, the asymptotic variance is $O(\varepsilon^{2})$, 
where $\varepsilon^{2}$ is the correlation time of the noise. 
However, for non-smooth reliability observables, 
such as threshold indicators and overshoot severities, 
rate estimates have not been established in general. 

In this paper, rate estimates are derived for the inertial dry-friction inclusion 
driven by colored noise in the three regimes whose white-noise propagators are known. 
The asymptotic variance of the resulting control variate estimators is 
$O(\varepsilon^{2\alpha})$ for $\alpha$-H\"older observables with $\alpha\in(0,1]$, 
$O(\varepsilon)$ for observables of bounded variation satisfying a one-sided continuity condition, 
$O(\varepsilon^{1+\alpha})$ for observables that are both $\alpha$-H\"older and of bounded variation, 
and $O(\varepsilon^{\min\{2,1+2\alpha\}})$ for observables with finitely many $\alpha$-cusps, 
where $\alpha \in (0, 1]$ is the cusp exponent. 
The last three bounds hold up to logarithmic factors. 
Bounds on the bias of the white-noise approximation and 
non-asymptotic sample-size estimates are also derived, 
and the comparison principle is shown to be preserved under coupled time discretization. 
Numerical experiments in the three regimes support the predicted rates.
\end{abstract}

\noindent\textbf{Keywords:} dry friction; colored noise; control variate; 
non-smooth observable; variance reduction.

\smallskip
\noindent\textbf{2020 MSC:} Primary 60H10, 65C05; 
Secondary 34A60, 60H35, 60J60, 70F40.

\section{Introduction}\label{sec:intro}
Dry (Coulomb) friction governs stick--slip oscillations and the positioning accuracy of machines \cite{FGHP98,HE17}. 
While a body slides, the friction force has magnitude $\Delta>0$ and opposes motion. 
At rest, it takes any value in $[-\Delta,\Delta]$ that keeps the body at rest. 
This is described by the subdifferential of $\varphi(v)=\Delta|v|$, 
with $\partial\varphi(v)=\{\Delta\sgn(v)\}$ for $v\neq0$ and $\partial\varphi(0)=[-\Delta,\Delta]$. 
The equation of motion is a differential inclusion \cite{MM93,Ste00}, 
well posed by maximal monotone operator theory \cite{Bre73,PR14}. 
Random loads lead to stochastic differential inclusions \cite{Cep95,Cep98,PR14}.

Under Gaussian white noise, dry friction is well understood. De Gennes \cite{dG05}
studied the stationary velocity law and the velocity correlations. 
The transition density is known in closed form for pure dry friction \cite{CD61,KS84,TVJ10}.
Touchette, Van der Straeten and Just \cite{TVJ10} solved the
Fokker--Planck equation by spectral series when viscous friction and a constant force are
added, and Touchette, Prellberg and Just \cite{TPJ12} obtained the Laplace transform of the propagator in closed form.
See also \cite{BCT10,BTC11,MG11} and the experiments \cite{GMC09,GC10,Gno13}.
Physical forcing, however, is colored \cite{LY87,HJ95}. 
A finite correlation time changes the dynamics qualitatively. 
It induces genuine sticking \cite{JH87,GJ17}, 
leads to a piecewise-deterministic description with a critical correlation time \cite{Dav84,GLM23}, 
and can make the white-noise approximation inaccurate for statistics such as the diffusivity \cite{GM23}. 
Thus white-noise results cannot replace computations at positive correlation time.

The forcing can be represented by a linear functional of a fast multivariate Ornstein--Uhlenbeck (OU) process
\cite{GM22}. 
Let $\eps^{2}$ be its correlation time. As $\eps\downarrow0$, 
the response converges to the solution of a white-noise stochastic differential equation by the
diffusion approximation \cite{PK74,FGPS07,PS08}, 
with the Wong--Zakai interpretation of the limit \cite{WZ65}. 
For fixed $\eps$, expectations are costly. 
The Kolmogorov equation lives in the augmented phase space and has coefficients of order $\eps^{-2}$. 
Plain Monte Carlo is slow for the small probabilities of reliability analysis. 
Control variates \cite{LW81,Nel90,Gla04,GL09} reduce this cost when a correlated quantity with a computable mean is available. 
Garnier and Mertz \cite{GM22} drive the colored-noise system and its white-noise limit by the same Wiener process. 
They compute the expectation of the limit from its Kolmogorov equation, which has no large terms, 
and estimate only the small difference by Monte Carlo. 
For smooth observables and the systems considered here, 
the asymptotic variance is $O(\eps^{2})$ \cite{GM22}. 
Other variance reduction methods require structure that is not natural here. 
Importance sampling \cite{AG07} needs a biasing measure, 
subset simulation \cite{AB01} intermediate events, 
and multilevel Monte Carlo \cite{Keb05,Gil08,Gil15} a hierarchy of resolutions along which the observable varies smoothly.

Reliability analysis, however, needs non-smooth observables \cite{MSW19,LM19,IMMW26}.
Examples are the indicator $\1_{\{|v|\ge v_f\}}$ of threshold crossing and severity measures that
quantify overshoot. The latter are continuous but singular at the threshold. For such
observables, the experiments of \cite{GM22} indicate a variance of order $\eps$ rather than
$\eps^{2}$. No rate has been proved. The difficulty is structural. The perturbed test function
method of \cite{GM22} requires observables with bounded derivatives. For an indicator, the
difference along the coupled trajectories is not small but rarely nonzero. Its second moment
is essentially the probability that the threshold lies between the two terminal velocities.
Bounding this probability requires regularity of the white-noise law near the threshold and
tail bounds for the coupling error. The mean-square coupling bound of \cite{GM22} alone cannot
give a better rate than $\eps^{2/3}$. Indeed, for $f$ of bounded variation, Avikainen's
inequality \cite{Avi09} bounds the $L^{q}$ distance between the coupled values of $f$ by a
moment of the coupling error and a density bound. With the second moment it gives
$\eps^{2/3}$, and it is sharp under this information; see also \cite{GHM09,TTY22}. We obtain
sharper rates for the inertial dry-friction system
\begin{equation}\label{eq:model-intro}
	\ddot X^{\eps}(t)+\partial\varphi\big(\dot X^{\eps}(t)\big)\ni
	b\big(X^{\eps}(t),\dot X^{\eps}(t)\big)+\sigma\,\xi^{\eps}(t),\qquad
	\big(X^{\eps}(0),\dot X^{\eps}(0)\big)=(x_0,v_0),\quad t\in[0,T],
\end{equation}
where $\xi^{\eps}$ is a colored noise of correlation time $\eps^{2}$, $\sigma>0$, and $b$ is
one of the three drifts for which the white-noise problem was solved in \cite{TVJ10}: 
$b=0$ (regime~(I), pure dry friction), 
$b(x,v)=-\gamma v$ with $\gamma>0$ (regime~(II), dry and viscous friction), 
and $b(x,v)=-\gamma v+\ell$ with $\gamma>0$, $\ell\in\R$ (regime~(III), 
dry and viscous friction with a constant force). 
Since $b$ depends only on velocity, 
$V^{\eps}=\dot X^{\eps}$ solves a scalar multivalued equation \cite{GM22}. 
This scalar structure is exploited in our
analysis.

\subsection{Contributions}\label{sec:contrib}
Throughout, $V^{\eps}$ and $V^{0}$ are the colored-noise velocity and its white-noise
counterpart driven by the same Wiener process.  

\paragraph{Main result.}
\Cref{thm:main} bounds the second moment $\E[(f(V^{\eps}(T))-f(V^{0}(T)))^{2}]$, which in turn bounds the
asymptotic variances of the control variate estimators of \cite{GM22}. The bound holds for four
classes of observables. Up to logarithmic factors in the last three cases,
\begin{equation*}\label{eq:rates-intro}
	\E\big[(f(V^{\eps}(T))-f(V^{0}(T)))^{2}\big]\lesssim
	\begin{cases}
		\eps^{2\alpha}, & f\ \alpha\text{-H\"older},\\
		\eps, & f\ \text{of bounded variation},\\
		\eps^{1+\alpha}, & f\ \alpha\text{-H\"older and of bounded variation},\\
		\eps^{\min\{2,1+2\alpha\}}, & f\ \text{with finitely many $\alpha$-cusps},
	\end{cases}
\end{equation*}
where observables of bounded variation satisfy a mild one-sided continuity condition. 
The first rate extends the rate $\eps^{2}$ of \cite{GM22} from smooth to Lipschitz observables ($\alpha=1$). 
The other three go beyond the $\eps^{2/3}$ barrier. 
Up to a logarithm, 
the bound is of order $\eps$ for threshold indicators and of order $\eps^{2}$ for severity measures with $\alpha>1/2$. 
For threshold indicators, this is the order indicated by the experiments of \cite{GM22}.

The proof uses two structural properties. 
First, a pathwise comparison principle shows that
the solution map from integrated forcing to velocity is $2$-Lipschitz for the uniform norm (\Cref{prop:comparison}); 
its continuity is known \cite[Proposition~4.16]{PR14}.
The integrated colored and white forcings differ by an explicit functional of the OU process of order $\eps$. 
Hence the coupling error has Gaussian tails at scale $\vth$ (\Cref{cor:coupling}). 
Second, the law of the white-noise terminal velocity has a density with an explicit bound in all three regimes (\Cref{prop:density}).
It is obtained by comparing the white-noise velocity, 
through Girsanov's theorem, 
with pure dry friction, whose transition density is known in closed form \cite{KS84}. 
In regimes (II) and (III), 
the propagator is known through spectral series \cite{TVJ10} and through its Laplace transform \cite{TPJ12}, 
which give no explicit bound; 
see \Cref{rem:density} for general density estimates.

\paragraph{Consequences.}
The rates imply bounds on the bias of the white-noise approximation, on variance reduction,
and on sample size (Corollaries \ref{cor:bias} to \ref{cor:bernstein}). 

\paragraph{Numerical experiments.}
The coupled time discretization satisfies a discrete comparison principle
(\Cref{prop:discrete}). Thus the coupling survives discretization. Discretization
affects the bias of the estimators but not the coupling error, so strong rates for such
schemes \cite{EKKL22} are not needed. 
In the three regimes, we test the predicted exponents, 
the coupling-error bounds and the variance reduction (Section~\ref{sec:numerics}).

\subsection{Organization and notation}
The paper is organized as follows. 
Section~\ref{sec:model} sets up the model and estimators. 
Section~\ref{sec:main} states the main results. 
Section~\ref{sec:proofs} contains the proofs. 
Section~\ref{sec:numerics} reports numerical experiments. 
Section~\ref{sec:concl} concludes. 
Appendix~\ref{app:aux} contains auxiliary lemmas, 
Appendix~\ref{app:whitenoise} the comparison principle, 
the coupling error and the density bound, 
and Appendix~\ref{app:implementation} implementation details.

We write $a\wedge b=\min\{a,b\}$, $a\vee b=\max\{a,b\}$, $x_+=\max\{x,0\}$, and
$\sgn(x)=x/|x|$ for $x\ne0$, with $\sgn(0)=0$. The set $\seg{x}{y} \triangleq [x\wedge y,x\vee y]$ is
the closed segment between $x,y\in\R$. 
We denote by $|\cdot|$ the Euclidean norm, by $\|\cdot\|$ the operator norm, by $\Leb$ Lebesgue measure, and by
$\Phibar(x) \triangleq (2\pi)^{-1/2}\int_x^{\infty}e^{-z^{2}/2}\,\dd z$ the standard Gaussian tail. 
The space $C^{0}_{0}([0,T])$ consists of the continuous $w:[0,T]\to\R$ with $w(0)=0$, 
with norm $\|w\|_{\infty}=\sup_{t\le T}|w(t)|$.  
Constants $C$ may change from line to line.

\section{Model and mathematical preliminaries}\label{sec:model}
Throughout, $T>0$, $x_0,v_0\in\R$, $\Delta>0$ and $\sigma>0$ are fixed, 
and $\varphi(v)=\Delta|v|$. 

\subsection{The dry-friction inclusion}\label{sec:regimes}
\begin{assumption}[velocity-dependent drift]\label{ass:vel}
	The drift in \eqref{eq:model-intro} is $b(x,v)=\bb(v) \triangleq -\gamma v+\ell$, where either $\gamma=\ell=0$
	(regime~(I)), or $\gamma>0$ and $\ell=0$ (regime~(II)), or $\gamma>0$ and $\ell\in\R$
	(regime~(III)).
\end{assumption}

Under \Cref{ass:vel}, system \eqref{eq:model-intro} is equivalent to
\begin{equation}\label{eq:reduced}
	\dot X^{\eps}=V^{\eps},\qquad
	\dot V^{\eps}+\partial\varphi(V^{\eps})\ni\bb(V^{\eps})+\sigma\xi^{\eps}(t),\qquad
	(X^{\eps}(0),V^{\eps}(0))=(x_0,v_0), \quad t\in [0,T]. 
\end{equation}
The velocity inclusion is autonomous and the position is recovered by quadrature. 
We want to compute $\mathrm{I}^{\eps} \triangleq \E[f(V^{\eps}(T))]$ for an observable $f$ of the terminal
velocity in one of the classes of \Cref{sec:observables}.  
We use the convex potential
\begin{equation}\label{eq:potential}
	U(v) \triangleq \tfrac{\gamma}{2}v^{2}-\ell v+\Delta|v|,\qquad
	U_* \triangleq \inf_{v\in\R}U(v)=
	\begin{cases}0,&\text{regime (I)},\\[2pt]
		-\dfrac{\big((|\ell|-\Delta)_+\big)^{2}}{2\gamma},&\text{regimes (II)--(III)},\end{cases}
\end{equation}
so that $\bb-\partial\varphi=-\partial U$: the noiseless velocity dynamics is the subdifferential flow of $U$. 
In regime~(III), $|\ell|<\Delta$ is the deterministic stick
phase (the minimum of $U$ is attained at $0$) and $|\ell|>\Delta$ the slip phase.

\subsection{Colored noise and coupling}\label{sec:noise}
\begin{assumption}[colored noise]\label{ass:colo}
	Let $d,d'\ge1$, 
	let $\mathbf{A} \in\R^{d\times d}$ have eigenvalues with strictly positive real part, 
	let $\mathbf{K} \in\R^{d\times d'}$, and let 
	$\mathbf{r} \in \R^{d}\setminus\{0\}$. 
	On a filtered probability space $(\Omega,\mathcal F,(\mathcal F_t)_{t\ge0},\PP)$ satisfying the
	usual conditions we are given a standard $d'$-dimensional $(\mathcal F_t)$-Brownian motion
	$\mathbf{W}$ and an $\mathcal F_0$-measurable random vector $\boldsymbol{\eta}_0\sim\mathcal N(0,\mathbf C)$, 
	where $\mathbf C \triangleq  \int_0^{\infty}e^{-\mathbf{A}u}\mathbf{K}$ $\mathbf{K}^{\top}e^{-\mathbf{A}^{\top}u}\,\dd u$ 
	is the unique solution of $\mathbf{A}\mathbf {C}+\mathbf {C}\mathbf{A}^{\top}=\mathbf{K}\mathbf{K}^{\top}$. 
	For $\eps>0$, $\boldsymbol{\eta}^{\eps}$ solves
	\begin{equation}\label{eq:OU}
		\dd\boldsymbol{\eta}^{\eps}(t)
		=-\frac{\mathbf{A}}{\eps^{2}}\,\boldsymbol{\eta}^{\eps}(t)\,\dd t+\frac{\mathbf{K}}{\eps}\,\dd \mathbf{W}(t),
		\qquad \boldsymbol{\eta}^{\eps}(0)=\boldsymbol{\eta}_0, 
	\end{equation}
	and the colored noise is $\xi^{\eps}(t) \triangleq  \mathbf{r}\cdot \boldsymbol{\eta}^{\eps}(t)/\eps $. 
\end{assumption}

The process $\boldsymbol{\eta}^{\eps}$ is stationary,  
$\widehat{\boldsymbol{\eta}}(u) \triangleq \boldsymbol{\eta}^{\eps}(\eps^{2}u)$ is the
stationary OU process 
$\dd\widehat{\boldsymbol{\eta}}=-\mathbf{A}\widehat{\boldsymbol{\eta}}\,\dd u+\mathbf{K}\,\dd\widehat{\mathbf{W}}$ of unit time
scale, driven by the Brownian motion $\widehat{\mathbf{W}}(u) \triangleq \eps^{-1}\mathbf{W}(\eps^{2}u)$. 
Thus $\xi^{\eps}$ has amplitude $O(\eps^{-1})$ and correlation time $O(\eps^{2})$, 
and converges to a white noise; 
$d\ge2$ covers sums of Lorentzian spectra and Kanai--Tajimi filtered noises
\cite{LY87}, \cite[Examples~3.1--3.4]{GM22}. 
Since $\mathbf{A}$ is invertible, 
integrating \eqref{eq:OU} and multiplying on the left by $\eps\mathbf{r}^{\top} \mathbf{A}^{-1}$ gives, 
almost surely for all $t\ge0$,
the identity on which the coupling rests: 
\begin{equation}\label{eq:integrated}
	\int_0^{t}\xi^{\eps}(u)\,\dd u
	= \mathbf{r}^{\top}\mathbf{A}^{-1}\mathbf{K}\,\mathbf{W}(t)-\eps\,\mathbf{r}^{\top}
	\mathbf{A}^{-1}\big(\boldsymbol{\eta}^{\eps}(t)-\boldsymbol{\eta}_0\big).
\end{equation}

We set $\mathbf{c}  \triangleq  \mathbf{A}^{-\top}\mathbf{r} \in\R^{d}\setminus\{0\}$, 
$\kappa_0 \triangleq \sigma|\mathbf{c}|>0$, and define the
integrated forcings and their difference,
\begin{equation*}\label{eq:forcings}
	N^{\eps}(t) \triangleq \sigma\!\int_0^{t}\xi^{\eps}(u)\,\dd u,\quad
	N^{0}(t) \triangleq \sigma\,\mathbf{c}^{\top}\mathbf{KW}(t),\quad
	h^{\eps}(t) \triangleq N^{\eps}(t)-N^{0}(t)=-\eps \sigma\,\mathbf{c}^{\top}\big(\boldsymbol{\eta}^{\eps}(t)-\boldsymbol{\eta}_0\big),
\end{equation*}
so that
\begin{equation}\label{eq:h-bound}
	\sup_{u\le t}|h^{\eps}(u)|\le 2\eps\kappa_0\,\sup_{u\le t}|\boldsymbol{\eta}^{\eps}(u)|,\qquad t\ge0 .
\end{equation}
The process $N^{0}$ is a real Brownian motion with variance $\mathfrak{s}^{2}$ per unit time, where
$\mathfrak{s} \triangleq \sigma|\mathbf{K}^{\top}\mathbf{c}|=\sigma|\mathbf{K}^{\top}\mathbf{A}^{-\top}\mathbf{r}|$.

\begin{assumption}[non-degenerate white-noise limit]\label{ass:nondeg}
	$\mathfrak{s}>0$.
\end{assumption}
Under \Cref{ass:nondeg}, 
$B \triangleq (\mathbf{K}^{\top}\mathbf{c})\cdot \mathbf{W}/|\mathbf{K}^{\top}\mathbf{c}|$ 
is a standard real $(\mathcal F_t)$-Brownian
motion and $N^{0}=\mathfrak{s}B$. 
By \eqref{eq:h-bound} and \Cref{lem:OUmax}, $N^{\eps}\to N^{0}$
uniformly on $[0,T]$ in every $L^{p}$, $p>0$; 
since the forcing is additive, the limit carries no
It\^o--Stratonovich correction.

\subsection{Solution concept and coupled pair}\label{sec:solution}
Since $N^{0}$ is a Brownian path, the velocity inclusion is written in integrated form.

\begin{definition}[Solution]\label{def:solution}
	Let $w\in C^{0}_{0}([0,T])$. A solution of
	\begin{equation}\label{eq:SP}
		\dd v(t)+\partial\varphi(v(t))\,\dd t\ni\bb(v(t))\,\dd t+\dd w(t),\qquad v(0)=v_0,
	\end{equation}
	is a pair $(v,k)$, where $v\in C([0,T])$ and $k:[0,T]\to\R$ is Lebesgue measurable with
	$k(t)\in\partial\varphi(v(t))$ for a.e.\ $t$, such that
	\begin{equation}\label{eq:SP-integral}
		v(t)=v_0+\int_0^{t}\big(\bb(v(u))-k(u)\big)\dd u+w(t),\qquad t\in[0,T].
	\end{equation}
	Necessarily $|k|\le\Delta$ a.e.; $k$ is the friction force.
\end{definition}

If $w\in C^{1}$, then $v$ is absolutely continuous and
$\dot v+\partial\varphi(v)\ni\bb(v)+\dot w$ a.e. 
Since $\varphi$ is finite and Lipschitz, 
\eqref{eq:SP} is a generalized Skorokhod problem, which is well posed by \cite[Theorem~4.17]{PR14}: 
for every $w\in C^{0}_{0}([0,T])$ it has a unique solution $(v,k)$,
$k$ being unique up to Lebesgue-null sets. 
We write $v=\mathcal S(w)$. 
By uniqueness, the solution map $\mathcal S$ is non-anticipative, 
i.e., $\mathcal S(w)|_{[0,t]}$ depends only on $w|_{[0,t]}$. 
The {coupled pair}
\begin{equation}\label{eq:coupled}
	V^{\eps} \triangleq \mathcal S(N^{\eps}),\qquad V^{0} \triangleq \mathcal S(N^{0}),
\end{equation}
with friction forces $k^{\eps}$ and $k^{0}$, consists of continuous adapted processes
(by non-anticipativity and the continuity of $\mathcal S$, Proposition~\ref{prop:comparison}); 
$V^{\eps}$ solves the velocity inclusion in \eqref{eq:reduced} pathwise, 
$V^{0}$ solves $\dd V^{0}+\partial\varphi(V^{0})\,\dd t\ni\bb(V^{0})\,\dd t+\dd N^{0}(t)$, 
and \eqref{eq:coupled} is the coupling of \cite{GM22}. 
We call
$Z^{\eps}(t)\triangleq V^{\eps}(t)-V^{0}(t)$, $t\in[0,T]$, the coupling error. 
Under colored forcing, the particle may stick: if $V^{\eps}\equiv0$ on an interval, 
then $k^{\eps}=\ell+\sigma\xi^{\eps}\in[-\Delta,\Delta]$ on that interval, 
whereas under white forcing it does not (Lemma~\ref{lem:nostick}).

\subsection{Classes of observables}\label{sec:observables}
\begin{definition}[Observables]\label{def:classes}
	Let $f:\R\to\R$.
	\begin{enumerate}[label=(\alph*),itemsep=0pt,topsep=2pt]
		\item \emph{H\"older class.} For $\alpha\in(0,1]$, define 
		$[f]_{\alpha} \triangleq \sup_{x\neq y}|f(x)-f(y)|/|x-y|^{\alpha}$ and
		$\Hol \triangleq \{f:[f]_{\alpha}<\infty\}$. 
		Here \(\mathcal C^{0,\alpha}\) is understood without any boundedness requirement. 
		
		\item \emph{Bounded variation.} Define $\TV(f) \triangleq \sup \sum_{i=1}^{n}|f(x_i)-f(x_{i-1})|$, 
		where the supremum is taken over all \(n\ge1\) and all finite increasing
		sequences \(-\infty<x_0<x_1<\cdots<x_n<\infty\), 
		and set $\mathrm{BV} \triangleq \{f : \TV(f)<\infty\}$.
		
		\item \emph{Condition {\rm (J)}.} For \(f\in\mathrm{BV}\), the one-sided limits 
		$f(x-)=\lim_{y\uparrow x}f(y)$ and $f(x+)=\lim_{y\downarrow x}f(y)$ exist for every \(x\in\R\). 
		We say that \(f\in\mathrm{BV}\) satisfies condition {\rm (J)} if
		$f(x)\in\{f(x-),f(x+)\}$ for every $x\in\R$, 
		and define $\BVJ \triangleq \{f\in\mathrm{BV}: f\text{ satisfies (J)}\}$.
		
		\item \emph{Cusp class.} Fix a finite nonempty set \(S\subset\R\) and \(\alpha\in(0,1]\).
		Let \(\mathrm{AC}_{\mathrm{loc}}(\R)\) denote the set of functions that are absolutely continuous on every compact interval.
		For \(f\in\mathrm{AC}_{\mathrm{loc}}(\R)\), define
		$[f]_{\alpha,S}
		\triangleq
		\operatorname*{ess\,sup}_{x\in\R\setminus S}
		\dist(x,S)^{1-\alpha}|f'(x)|$, 
		where \(f'\) denotes the derivative of \(f\), which exists almost everywhere.
		Then define
		\[
		\Cusp
		\triangleq
		\left\{
		f\in\mathrm{AC}_{\mathrm{loc}}(\R):
		[f]_{\alpha,S}<\infty
		\right\}.
		\] 
	\end{enumerate}
\end{definition}
For \(f\in\mathrm{BV}\), recall that \(f\) is bounded and Borel, 
has at most countably many discontinuities, and admits finite limits at \(\pm\infty\). Write
$f_+(x) \triangleq f(x+)$, $f_-(x) \triangleq f(x-)$, and 
$f(-\infty)\triangleq \lim_{x\to-\infty}f(x)$. 
Then there exists a unique finite signed Borel measure \(\mu_f\) on \(\mathbb{R}\) such that, 
for every \(x\in\mathbb{R}\),
\begin{equation}\label{eq:muf}
	\mu_f\big((-\infty,x]\big)=f_+(x)-f(-\infty),\quad
	\mu_f(\{x\})=f_{+}(x)-f_{-}(x),\quad
	|\mu_f|(\mathbb{R})=\operatorname{TV}(f_+)\le\operatorname{TV}(f),
\end{equation}
see \cite[Section~3.5]{Fol99}. 
Condition {\rm (J)} holds for continuous functions and for right- or left-continuous functions; 
it excludes spurious values at removable discontinuities, such as those of \(\1_{\{0\}}\).

For fixed finite nonempty \(S\subset\R\) and \(\alpha\in(0,1]\), 
functions in \(\Cusp\) are locally Lipschitz on \(\R\setminus S\), 
may have \(\alpha\)-Hölder cusps at the points of \(S\), and belong to \(\mathcal C^{0,\alpha}\) 
(Lemma~\ref{lem:cusp}); for \(\alpha=1\), \(\mathcal K^1_S\) is the class of Lipschitz functions on \(\R\).

\begin{example}[Test observables]\label{ex:observables}
	Fix a threshold \(v_f>0\).
	\begin{enumerate}[label=(\alph*),itemsep=0pt,topsep=2pt]
		\item The \emph{crossing indicator} \(f_{\mathrm{ind}}(v)\triangleq \1_{\{|v|\ge v_f\}}\) belongs to \(\BVJ\), with \(\TV(f_{\mathrm{ind}})=2\), but
		\[
		f_{\mathrm{ind}}\notin\mathcal C^{0,\alpha}
		\qquad\text{for every }\alpha\in(0,1].
		\]
		
		\item The \emph{severity family}
		\[
		f_\alpha(v)\triangleq\min\{((|v|-v_f)_+)^\alpha,1\},
		\qquad \alpha\in(0,1],
		\]
		satisfies \(0\le f_\alpha\le1\), \(\TV(f_\alpha)=2\), and \([f_\alpha]_\alpha\le1\). Moreover,
		\[
		f_\alpha\in\mathcal K^\alpha_{\{\pm v_f\}},
		\qquad
		[f_\alpha]_{\alpha,\{\pm v_f\}}=\alpha.
		\]
		For \(\alpha=1\), \(f_\alpha\) is the capped linear overshoot; 
		smaller \(\alpha\) model damage measures that saturate in the overshoot, 
		as wear and fatigue measures do, and
		\[
		f_\alpha\to f_{\mathrm{ind}}
		\quad\text{pointwise on }\{|v|\neq v_f\}
		\quad\text{as }\alpha\downarrow0.
		\]
		
		\item The \emph{smooth anchor} \(f_{\mathrm{sm}}(v)\triangleq\tanh((v/v_f)^2)\) is Lipschitz, 
		has bounded derivatives of all orders, and satisfies \(\TV(f_{\mathrm{sm}})=2\).
	\end{enumerate}
\end{example}

\subsection{Estimators}\label{sec:estimators}
Let $(\mathbf{W}^{i}, \boldsymbol{\eta}_0^{i})_{i\ge1}$ be i.i.d.\ copies of $(\mathbf{W}, \boldsymbol{\eta}_0)$, 
let $(V^{\eps,i},V^{0,i})$ be the coupled pair \eqref{eq:coupled} driven by $(\mathbf{W}^{i}, \boldsymbol{\eta}_0^{i})$,
and write $F^{\bullet} \triangleq f(V^{\bullet}(T))$, 
$F^{\bullet,i} \triangleq f(V^{\bullet,i}(T))$ for $\bullet\in\{\eps,0\}$, 
and $\mathrm{I}^{0} \triangleq \E[F^{0}]$. 
Following \cite{GM22}, we consider the standard Monte Carlo, 
the control variate and the practical optimal control variate estimators
\begin{equation}\label{eq:estimators}
	\begin{gathered}
		\widehat {\mathrm{I}}^{\eps}_N \triangleq \frac1N\sum_{i=1}^{N}F^{\eps,i},\quad
		\widehat J^{\eps}_N \triangleq \mathrm{I}^{0}+\frac1N\sum_{i=1}^{N}\big(F^{\eps,i}-F^{0,i}\big),\quad
		\widehat K^{\eps}_N \triangleq \widehat\lambda^{\eps}_N \mathrm{I}^{0}+\frac1N\sum_{i=1}^{N}
		\big(F^{\eps,i}-\widehat\lambda^{\eps}_N F^{0,i}\big),
	\end{gathered}
\end{equation}
where $\widehat\lambda^{\eps}_N$ is the empirical regression coefficient of the $F^{\eps,i}$
on the $F^{0,i}$ (set to $0$ when the empirical variance of the $F^{0,i}$ vanishes), which 
estimates $\lambda^{\eps} \triangleq \Cov(F^{\eps},F^{0})/\Var(F^{0})$ ($\lambda^{\eps} \triangleq 0$ if
$\Var(F^{0})=0$). For square-integrable $F^{\eps},F^{0}$ we set
\begin{equation}\label{eq:variances}
	\sigma^{2}_{\mathrm{I}^{\eps}} \triangleq \Var(F^{\eps}),\qquad
	\sigma^{2}_{J^{\eps}} \triangleq \Var(F^{\eps} - F^{0}),\qquad
	\sigma^{2}_{K^{\eps}} \triangleq \min_{\lambda\in\R}\Var\big(F^{\eps}-\lambda F^{0}\big),
\end{equation}
the minimum being attained at $\lambda=\lambda^{\eps}$; in particular
$\sigma^{2}_{K^{\eps}}\le\min\{\sigma^{2}_{\mathrm{I}^{\eps}},\sigma^{2}_{J^{\eps}}\}$ (take
$\lambda\in\{0,1\}$). Let $\rho^{\eps} \triangleq \Cov(F^{\eps},F^{0})/(\sigma_{\mathrm{I}^{\eps}}
\Var(F^{0})^{1/2})$ ($\rho^{\eps} \triangleq 0$ if $\sigma_{\mathrm{I}^{\eps}}\Var(F^{0})=0$). By
\cite[Proposition~2.1]{GM22}, if $f\in\Hol\cup\mathrm{BV}$ and
$\Var(F^{0})>0$, then $\widehat{\mathrm{I}}^{\eps}_N$ and $\widehat J^{\eps}_N$ are unbiased, 
the three estimators satisfy central limit theorems with the asymptotic variances
\eqref{eq:variances}, and
\begin{equation}\label{eq:sigmaK}
	\sigma^{2}_{K^{\eps}}=\big(1-(\rho^{\eps})^{2}\big)\,\sigma^{2}_{\mathrm{I}^{\eps}}
	=\sigma^{2}_{J^{\eps}}-\Var(F^{0})\,(1-\lambda^{\eps})^{2}.
\end{equation}
The control variates target $\mathrm{I}^{\eps}$ whatever the size of 
the bias $\mathrm{I}^{\eps}-\mathrm{I}^{0}$ of the white-noise approximation (\Cref{cor:bias}). 
For terminal observables, $\mathrm{I}^{0}=\int_\R f\,p_T$, 
where $p_T$ is the density of $V^{0}(T)$ (\Cref{prop:density}), 
known in closed form in regime (I) {\cite{CD61,KS84,TVJ10}}
and {, in regimes (II)--(III),} as a spectral series {\cite{TVJ10} and through its Laplace transform \cite{TPJ12}}; 
thus $\mathrm{I}^{0}$ is a one-dimensional quadrature (Section~\ref{sec:ref-values}).

\section{Main results}\label{sec:main}
Throughout this section, Assumptions  \ref{ass:vel}, \ref{ass:colo}, and \ref{ass:nondeg} are assumed to hold. 
For an observable $f$, we set $D^{\eps}\triangleq f\big(V^{\eps}(T)\big)-f\big(V^{0}(T)\big)$, 
we write $Z^{\eps}\triangleq Z^{\eps}(T)=V^{\eps}(T)-V^{0}(T)$ for the terminal coupling error,
and we set $\vth\triangleq\eps(1+\log(1/\eps))^{1/2}$ for $\eps\in(0,1]$. 
We use the mean-square coupling bound of \cite[Lemma~6.1]{GM22}: there are $C_1>0$ and
$\eps_{\rm GM}\in(0,T]$ such that
\begin{equation}\label{eq:C1}
	\E\big[|Z^{\eps}|^{2}\big]\le C_1\eps^{2},\qquad \eps\in(0,\eps_{\rm GM}],
\end{equation}
and we set $\eps_0\triangleq\eps_{\rm GM}\wedge1$. 
Constants denoted by $C$ may depend on 
$\Delta,\gamma,\ell,\sigma,\mathbf{A},\mathbf{K},\mathbf{r},d,d',T$ and $v_0$, 
but not on $\eps$ or on the observable unless this is indicated.

\subsection{Variance rates}\label{sec:thm-main}
The proof of the following theorem is given in \Cref{sec:proof-main}. 
\begin{theorem}\label{thm:main}
	Let $f$ belong to at least one of the classes in {\rm(a)--(d)} below. 
	Then, for every $\eps>0$,
	\begin{equation}\label{eq:main}
		\sigma^{2}_{K^{\eps}}\le\sigma^{2}_{J^{\eps}}\le\E\big[(D^{\eps})^{2}\big],
	\end{equation}
	and, for all $\eps\in(0,\eps_0]$, $\E[(D^{\eps})^{2}]$ satisfies the bound of each class to which $f$ belongs:
	\begin{enumerate}[label={\rm(\alph*)},itemsep=1pt,topsep=2pt]
		\item if $f\in\Hol$ for some $\alpha\in(0,1]$, then
		$\E[(D^{\eps})^{2}]\le[f]_\alpha^{2}\,C_1^{\alpha}\,\eps^{2\alpha}$;
		\item if $f\in\BVJ$, then $\E[(D^{\eps})^{2}]\le C\,\TV(f)^{2}\,\vth$;
		\item if $f\in\Hol\cap\BVJ$, then $\E[(D^{\eps})^{2}]\le C\,[f]_\alpha\TV(f)\,\vth^{1+\alpha}$;
		\item if $f\in\Cusp$ for a finite nonempty set $S$ and some $\alpha\in(0,1]$, then
		\[
		\E\big[(D^{\eps})^{2}\big]\le C_{\alpha,S}\,[f]_{\alpha,S}^{2}\,\psi_\alpha(\eps),
		\qquad
		\psi_\alpha(\eps) \triangleq 
		\begin{cases}
			\vth^{\,1+2\alpha}, & \alpha<1/2,\\[1pt]
			\vth^{\,2}\,\big(1+\log(1/\eps)\big), & \alpha=1/2,\\[1pt]
			\vth^{\,2}, & \alpha>1/2.
		\end{cases}
		\]
	\end{enumerate}
	The constants $C$ and $C_{\alpha,S}$ do not depend on $\eps$ or $f$; 
	the constant in {\rm (c)} depends on $\alpha$, and $C_{\alpha,S}$ depends on $\alpha$ and $|S|$. 
	Part~{\rm(a)} holds without {\rm\Cref{ass:nondeg}}, 
	and parts {\rm(b)--(d)} hold for all $\eps\in(0,1]$. 
\end{theorem}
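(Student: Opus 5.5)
The inequality \eqref{eq:main} is elementary. $\sigma^2_{K^\eps}\le\sigma^2_{J^\eps}$ follows by taking $\lambda=1$ in \eqref{eq:variances}, and $\Var(D^\eps)\le\E[(D^\eps)^2]$. For (a) I will use Jensen with exponent $\alpha\le1$:
\[
\E[(D^\eps)^2]\le[f]_\alpha^2\,\E|Z^\eps|^{2\alpha}\le[f]_\alpha^2\,(\E|Z^\eps|^2)^{\alpha}\le[f]_\alpha^2C_1^\alpha\eps^{2\alpha},
\]
by \eqref{eq:C1}. This uses no density information, so it does not need \Cref{ass:nondeg}.

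For (b)--(d) I combine two ingredients stated later in the paper. The first is the coupling tail from \Cref{prop:comparison} and \Cref{cor:coupling}: $|Z^\eps|\le 2\|h^\eps\|_\infty$, and by \eqref{eq:h-bound} and \Cref{lem:OUmax}, $\PP(|Z^\eps|>r)\le C e^{-c r^2/\vth^2}$, so $\E|Z^\eps|^p\le C_p\vth^p$. The second is the density bound from \Cref{prop:density}: $V^0(T)$ has a density $p_T\le C$.

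For (b), condition (J) gives $|f(y)-f(x)|\le|\mu_f|(\seg{x}{y})$ for all $x,y$; the endpoint atoms are controlled because $f(x)$ is one of the one-sided limits. Hence
\[
|D^\eps|\le \int \1\{u\in\seg{V^0(T)}{V^0(T)+Z^\eps}\}\,|\mu_f|(\dd u).
\]
Since $|D^\eps|\le\TV(f)$, we have $\E[(D^\eps)^2]\le\TV(f)\,\E|D^\eps|$. By Fubini, $\E|D^\eps|\le\int\PP(u\in\seg{V^0(T)}{V^0(T)+Z^\eps})\,|\mu_f|(\dd u)$. For each $u$, this probability is at most $\PP(|V^0(T)-u|\le r)+\PP(|Z^\eps|>r)\le 2Cr+Ce^{-cr^2/\vth^2}$. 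Choosing $r=K\vth$ with $K$ large enough that $e^{-cK^2(1+\log 1/\eps)}\le\eps$ bounds it by $C\vth$. This yields $C\TV(f)^2\vth$.

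For (c), I bound $(D^\eps)^2\le[f]_\alpha|Z^\eps|^\alpha\cdot|D^\eps|$ and use the same measure representation. For each $u$ I need
\[
\E\big[|Z^\eps|^\alpha\1\{u\in\seg{V^0}{V^0+Z^\eps}\}\big].
\]
I split into $\{|Z^\eps|\le r\}$, where the term is at most $r^\alpha\cdot Cr$, and the complement, where Cauchy--Schwarz with the Gaussian tail gives a contribution of order $\eps$. Taking $r=K\vth$ gives $\vth^{1+\alpha}$ uniformly in $u$, and integrating against $|\mu_f|$ gives $C[f]_\alpha\TV(f)\vth^{1+\alpha}$.

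For (d), I localize near $S$. The cusp condition gives
\[
|f(y)-f(x)|\le[f]_{\alpha,S}\,|x-y|\,\sup_{z\in\seg xy\setminus S}\dist(z,S)^{\alpha-1}.
\]
This is bounded by $C[f]_{\alpha,S}|x-y|\dist(\seg xy,S)^{\alpha-1}$ when $\seg xy$ stays away from $S$, and by $C[f]_{\alpha,S}|x-y|^\alpha$ otherwise (\Cref{lem:cusp}). On $\{|Z^\eps|\le r\}$, write $\delta=\dist(V^0(T),S)$. When $\delta\ge2r$, $(D^\eps)^2\lesssim|Z^\eps|^2\delta^{2\alpha-2}$; when $\delta<2r$, $(D^\eps)^2\lesssim r^{2\alpha}$. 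The second event has probability at most $C|S|r$, so it contributes $r^{1+2\alpha}$. For the first, I use $\E[|Z^\eps|^2\delta^{2\alpha-2}\1_{\delta\ge2r}]$. Since $|Z^\eps|\le r$ on this event, it is at most $r^2\,\E[\delta^{2\alpha-2}\1_{\delta\ge 2r}]$, and the density bound gives $r^2\int_{2r}^{R}s^{2\alpha-2}\,\dd s$. This integral is $\lesssim r^{2\alpha-1}$ if $\alpha<1/2$, $\log(1/r)$ if $\alpha=1/2$, and $O(1)$ if $\alpha>1/2$; far from $S$ the function is Lipschitz and the bound is $|Z^\eps|^2$. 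The event $\{|Z^\eps|>r\}$ is handled by Hölder continuity (\Cref{lem:cusp}) and Cauchy--Schwarz, which gives $O(\eps^2)$ for $r=K\vth$. Altogether this gives $\psi_\alpha(\eps)$.

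The main obstacle is the input ingredients themselves, namely the Gaussian tail of $Z^\eps$ at scale $\vth$ and the uniform density bound for $p_T$. I take these from \Cref{cor:coupling} and \Cref{prop:density}. Within this proof, the delicate point is step (b), specifically handling atoms of $\mu_f$ at the endpoints via (J). I will also need to check that every constant is uniform in $u$ and in $f$.
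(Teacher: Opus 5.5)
Your proposal is correct and follows the paper's proof essentially step by step: Jensen for (a), the level-set bound $\PP(E_u)\le\PP(|Z^\eps|>\theta)+2M\theta$ combined with Tonelli for (b)--(c), and for (d) the same three-event split according to whether $|Z^\eps|>\theta$ and whether $\dist(V^{0}(T),S)\le2\theta$. One slip to fix: the tail you need is the $\eps$-scale bound of \Cref{cor:coupling}(ii),(iv), $\PP(|Z^\eps|>\mathsf K_q\vth)\le(T+1)C_0\eps^{q}$, because the form $Ce^{-cr^{2}/\vth^{2}}$ you wrote gives only a constant at $r=K\vth$; your computation with $e^{-cK^{2}(1+\log(1/\eps))}$ does use the right form, provided $K$ is also large enough to absorb the prefactor $\lceil T/\eps^{2}\rceil$.
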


\subsection{Consequences}\label{sec:consequences}
The proofs of the following corollaries are given in \Cref{pf:coros}.
\begin{corollary}[Bias of the white-noise approximation]\label{cor:bias}
	Always $|\mathrm{I}^{\eps}-\mathrm{I}^{0}|\le\E[|D^{\eps}|]$. Moreover:
	\begin{enumerate}[label= \rm{(\alph*)},itemsep=1pt,topsep=2pt]
		\item if $f\in\Hol$, then $|\mathrm{I}^{\eps}-\mathrm{I}^{0}|\le[f]_\alpha C_1^{\alpha/2}\eps^{\alpha}$ for
		$\eps\in(0,\eps_0]$;
		\item if $f\in\BVJ$, then $|\mathrm{I}^{\eps}-\mathrm{I}^{0}|\le C\,\TV(f)\,\vth$ for $\eps\in(0,1]$;
		\item if $f\in\Cusp$, then $|\mathrm{I}^{\eps}-\mathrm{I}^{0}|\le C'_{\alpha,S}\,[f]_{\alpha,S}\,\vth$ for
		$\eps\in(0,1]$, where $C'_{\alpha,S}$ depends only on $\alpha$, $|S|$, and the model parameters.
	\end{enumerate}
\end{corollary}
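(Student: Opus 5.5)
The statement to prove is Corollary~\ref{cor:bias}. The general inequality comes first. The coupled pair is built so that $V^{\eps}(T)$ has the law of the colored-noise velocity and $V^{0}(T)$ the law of the white-noise velocity, hence $\mathrm{I}^{\eps}-\mathrm{I}^{0}=\E[D^{\eps}]$. Jensen's inequality then gives $|\mathrm{I}^{\eps}-\mathrm{I}^{0}|\le\E[|D^{\eps}|]$. The three parts reduce to first-moment bounds on $D^{\eps}$, which I derive from the ingredients behind \Cref{thm:main} rather than from its second-moment bounds. Applying Cauchy--Schwarz to \Cref{thm:main} would lose half the exponent in (b), giving only $\vth^{1/2}$.

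For part (a), Hölder continuity gives $|D^{\eps}|\le[f]_\alpha|Z^{\eps}|^{\alpha}$. Since $\alpha\le 2$, Jensen and \eqref{eq:C1} give
\[
\E|Z^{\eps}|^{\alpha}\le(\E|Z^{\eps}|^{2})^{\alpha/2}\le C_1^{\alpha/2}\eps^{\alpha},
\]
which is exactly the stated bound.

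For part (b), I use the representation of $f\in\BVJ$ through $\mu_f$ in \eqref{eq:muf}. Condition (J) lets me write $f(y)-f(x)$ as $\pm\mu_f$ of the half-open or closed segment between $x$ and $y$, up to endpoint conventions. This gives
\[
|D^{\eps}|\le\int_\R \1_{\{a\in\seg{V^{0}(T)}{V^{\eps}(T)}\}}\,\dd|\mu_f|(a).
\]
Taking expectations and using Fubini,
\[
\E|D^{\eps}|\le|\mu_f|(\R)\,\sup_a\PP\big(|V^{0}(T)-a|\le|Z^{\eps}|\big).
\]
Fix $\delta=C\vth$ and split
\[
\PP(|V^{0}(T)-a|\le|Z^{\eps}|)\le\PP(|V^{0}(T)-a|\le\delta)+\PP(|Z^{\eps}|>\delta).
\]
The density bound of \Cref{prop:density} makes the first term at most $2\|p_T\|_\infty\delta$. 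The Gaussian tails of \Cref{cor:coupling} at scale $\vth$ make the second term at most $\eps$ once the constant in $\delta$ is chosen large enough. Together these give $C\,\TV(f)\vth$ for $\eps\in(0,1]$.

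For part (c), I decompose $f$ near $S$. Take $\delta=\vth$, let $\chi$ be a smooth cutoff equal to $1$ on the $\delta$-neighbourhood of $S$ and supported in the $2\delta$-neighbourhood, and split into two events. On the event that $V^{0}(T)$ lies within $3\delta$ of $S$, I bound $|D^{\eps}|\le[f]_\alpha|Z^{\eps}|^{\alpha}$ by Lemma~\ref{lem:cusp}, which puts $f$ in $\Hol$. The density bound gives this event probability at most $C|S|\delta$, so Hölder's inequality yields a contribution of order $|S|\delta\cdot\eps^{\alpha}$ up to logs, which is $\lesssim\vth$.

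On the complementary event, intersected with $\{|Z^{\eps}|\le\delta\}$, the whole segment stays at distance at least $\dist(V^{0}(T),S)/2$ from $S$. There $f$ is Lipschitz with constant $[f]_{\alpha,S}\,\dist^{\alpha-1}$, so $|D^{\eps}|\le C[f]_{\alpha,S}\dist(V^{0}(T),S)^{\alpha-1}|Z^{\eps}|$. The weight $\dist^{\alpha-1}$ is integrable against the bounded density, and since $\alpha-1\le0$ and the distance is at least $3\delta$, the weight is at most $\delta^{\alpha-1}$. A Cauchy--Schwarz step then gives $\lesssim[f]_{\alpha,S}\,\E|Z^{\eps}|\lesssim\eps$, up to logarithmic factors absorbed into $\vth$. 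The event $\{|Z^{\eps}|>\delta\}$ is handled through the Hölder bound together with the Gaussian tail.

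The main obstacle is the endpoint and atom bookkeeping in (b), which needs condition (J) so that $D^{\eps}\neq0$ only when an atom or the support of $\mu_f$ meets the segment. The tail-versus-density balancing is routine.
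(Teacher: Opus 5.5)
Your general inequality and parts (a) and (b) follow the paper's proof. In (b) this means the crossing inequality, Tonelli, and the split of $\PP(|V^{0}(T)-a|\le|Z^{\eps}|)$ at $\mathsf K_1\vth$. Your three events in (c) are also essentially the paper's.

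The gap is your estimate on the event $\{\Upsilon>3\delta,\ |Z^{\eps}|\le\delta\}$, where $\Upsilon=\dist(V^{0}(T),S)$. Neither fact you cite, combined with a moment of $Z^{\eps}$, gives $O(\vth)$ for every $\alpha$.
\begin{itemize}
\item Cauchy--Schwarz produces $\big(\E[\Upsilon^{2\alpha-2}\1_{\{\Upsilon>3\delta\}}]\big)^{1/2}(\E|Z^{\eps}|^{2})^{1/2}$. For $\alpha<1/2$ the function $x^{2\alpha-2}$ is not integrable at $0$. The bounded density of $\Upsilon$ then only gives $\E[\Upsilon^{2\alpha-2}\1_{\{\Upsilon>3\delta\}}]\lesssim\delta^{2\alpha-1}$, so this route yields $\eps\,\vth^{\alpha-1/2}$, which is not $O(\vth)$.
\item Using $\Upsilon^{\alpha-1}\le\delta^{\alpha-1}$ instead gives $\delta^{\alpha-1}\E|Z^{\eps}|\approx\vth^{\alpha}$, again too weak for $\alpha<1$.
\item A bound of the form $\lesssim\E|Z^{\eps}|$ would need control of the law of $\Upsilon$ given $Z^{\eps}$, which you do not have.
\end{itemize}
The missing step is to use the pathwise bound $|Z^{\eps}|\le\delta$ that defines this event. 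Then the contribution is at most $C[f]_{\alpha,S}\,\delta\,\E[\Upsilon^{\alpha-1}]\le C[f]_{\alpha,S}\,\delta\,(1+2|S|M/\alpha)$. This holds because $x^{\alpha-1}$ is integrable at $0$ for every $\alpha>0$ and $\Upsilon$ has density at most $2|S|M$. This is how the paper bounds its third term, and your version fails exactly in the range $\alpha<1/2$.

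There are two smaller problems in (c).
\begin{itemize}
\item With $\delta=\vth$, Proposition~\ref{cor:coupling}(ii) only gives $\PP(|Z^{\eps}|>\delta)\le(T+1)C_0\eps^{-2}e^{-c_0(1+\log(1/\eps))/(16\kappa_0^{2})}$, which need not be small. As in (b), you must take $\delta=\mathsf K_q\vth$; $q\ge2$ suffices, since then $(\E|Z^{\eps}|^{2\alpha})^{1/2}\PP(|Z^{\eps}|>\delta)^{1/2}\lesssim\vth^{\alpha}\eps\le\vth$.
\item On the near event, no choice of H\"older exponents gives $|S|\delta\,\eps^{\alpha}$, since that would require $|Z^{\eps}|^{\alpha}\in L^{\infty}$. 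Cauchy--Schwarz gives only $\vth^{\alpha+1/2}$, again insufficient for $\alpha<1/2$. Either use the exponents $1/\alpha$ and $1/(1-\alpha)$, which give $(\E|Z^{\eps}|)^{\alpha}\,\PP(\Upsilon\le3\delta)^{1-\alpha}\lesssim\vth$, or split further on $\{|Z^{\eps}|\le\delta\}$ as the paper does, obtaining $\delta^{\alpha}\cdot C|S|M\delta$ plus the tail term.
\end{itemize}
Finally, the cutoff $\chi$ plays no role in your argument and can be dropped.
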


Part (b) improves on the $O(\vth^{1/2})$ given by Theorem~\ref{thm:main}(b) and the
Cauchy--Schwarz inequality, because $|D^{\eps}|$ itself is controlled by the crossing
inequality (Lemma~\ref{lem:crossing}); for smooth observables the bias is $O(\eps^{2})$
\cite[Lemma~6.1]{GM22}.

\begin{corollary}[Variance reduction]\label{cor:vr}
	Let $f$ be as in \rm{\Cref{thm:main}} with $\Var(F^{0})>0$, 
	let $\mathcal R_f(\eps)$ be one of the bounds on $\E[(D^{\eps})^{2}]$ given there, 
	and write $\sigma_0 \triangleq \Var(F^{0})^{1/2}$. 
	Then $|\sigma_{\mathrm{I}^{\eps}}-\sigma_0|\le\sigma_{J^{\eps}}$, 
	so that $\sigma_{\mathrm{I}^{\eps}}\to\sigma_0$,
	and, for all $\eps\in(0,\eps_0]$ such that $\mathcal R_f(\eps)\le\sigma_0^{2}/4$,
	\begin{equation*}\label{eq:vr}
		\frac{\sigma^{2}_{K^{\eps}}}{\sigma^{2}_{\mathrm{I}^{\eps}}}=1-(\rho^{\eps})^{2}
		\le\frac{\sigma^{2}_{J^{\eps}}}{\sigma^{2}_{\mathrm{I}^{\eps}}}\le\frac{4\mathcal R_f(\eps)}{\sigma_0^{2}},
		\qquad |1-\lambda^{\eps}|\le\frac{\sigma_{J^{\eps}}}{\sigma_0}\le\frac{\mathcal R_f(\eps)^{1/2}}{\sigma_0}.
	\end{equation*}
\end{corollary}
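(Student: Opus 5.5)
The plan is to work from the identity $F^{\eps}=F^{0}+(F^{\eps}-F^{0})$ and the variance-level bounds already in hand. Theorem~\ref{thm:main}, relation \eqref{eq:main}, gives $\sigma^{2}_{J^{\eps}}=\Var(D^{\eps})\le\E[(D^{\eps})^{2}]\le\mathcal R_f(\eps)$ for $\eps\in(0,\eps_0]$. The first claim is the triangle inequality for the $L^{2}$ norm on centered random variables: $\sigma_{\mathrm{I}^{\eps}}=\|F^{\eps}-\E F^{\eps}\|_{L^{2}}$ and $\sigma_0=\|F^{0}-\E F^{0}\|_{L^{2}}$ differ by at most $\|D^{\eps}-\E D^{\eps}\|_{L^{2}}=\sigma_{J^{\eps}}$. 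Since $\mathcal R_f(\eps)\to0$ in every class, we get $\sigma_{\mathrm{I}^{\eps}}\to\sigma_0$.

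For the variance ratio, assume $\mathcal R_f(\eps)\le\sigma_0^{2}/4$. Then $\sigma_{J^{\eps}}\le\sigma_0/2$, so $\sigma_{\mathrm{I}^{\eps}}\ge\sigma_0-\sigma_{J^{\eps}}\ge\sigma_0/2>0$. The equality $\sigma^{2}_{K^{\eps}}/\sigma^{2}_{\mathrm{I}^{\eps}}=1-(\rho^{\eps})^{2}$ is \eqref{eq:sigmaK}; it applies because $\Var(F^{0})>0$ and $\sigma_{\mathrm{I}^{\eps}}>0$. The middle inequality is $\sigma^{2}_{K^{\eps}}\le\sigma^{2}_{J^{\eps}}$, which follows by taking $\lambda=1$ in \eqref{eq:variances}. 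For the last inequality, divide $\sigma^{2}_{J^{\eps}}\le\mathcal R_f(\eps)$ by $\sigma^{2}_{\mathrm{I}^{\eps}}\ge\sigma_0^{2}/4$.

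For the regression coefficient, write $\Cov(F^{\eps},F^{0})=\Var(F^{0})+\Cov(D^{\eps},F^{0})$. Dividing by $\Var(F^{0})$ gives $1-\lambda^{\eps}=-\Cov(D^{\eps},F^{0})/\sigma_0^{2}$. Cauchy--Schwarz then gives $|1-\lambda^{\eps}|\le\sigma_{J^{\eps}}\sigma_0/\sigma_0^{2}=\sigma_{J^{\eps}}/\sigma_0\le\mathcal R_f(\eps)^{1/2}/\sigma_0$. The bound could also be read off from the second identity in \eqref{eq:sigmaK}, but the direct covariance computation is cleaner.

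Square integrability is automatic: $f\in\mathrm{BV}$ is bounded, and Hölder $f$ has growth $|v|^{\alpha}$ while $V^{\eps}(T)$ and $V^{0}(T)$ have Gaussian-type moments, by comparison with the forcing. I see no real obstacle. The only care needed is the lower bound $\sigma_{\mathrm{I}^{\eps}}\ge\sigma_0/2$, which justifies dividing and explains the factor $4$.
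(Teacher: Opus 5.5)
Your proposal is correct and follows the paper's proof step by step: Minkowski's inequality in $L^{2}$ for $|\sigma_{\mathrm{I}^{\eps}}-\sigma_0|\le\sigma_{J^{\eps}}$, the lower bound $\sigma_{\mathrm{I}^{\eps}}\ge\sigma_0/2$ under $\mathcal R_f(\eps)\le\sigma_0^{2}/4$, and \eqref{eq:sigmaK} and \eqref{eq:variances} for the identity and the middle inequality. The only deviation is the bound on $|1-\lambda^{\eps}|$: you obtain it from $1-\lambda^{\eps}=-\Cov(D^{\eps},F^{0})/\sigma_0^{2}$ and Cauchy--Schwarz, whereas the paper reads it off from $\sigma_0^{2}(1-\lambda^{\eps})^{2}=\sigma^{2}_{J^{\eps}}-\sigma^{2}_{K^{\eps}}\le\sigma^{2}_{J^{\eps}}$, and the two computations are equivalent.
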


Since a coupled path costs about twice a colored-noise path and $\mathrm{I}^{0}$ is computed once,
$\widehat J^{\eps}_N$ reaches a given accuracy with about
$2\sigma^{2}_{J^{\eps}}/\sigma^{2}_{\mathrm{I}^{\eps}}=O(\mathcal R_f(\eps))$ times the work of plain
Monte Carlo (see \cite[Section~8]{GM22} when $\mathrm{I}^{0}$ is itself estimated).

\begin{corollary}[Non-asymptotic sample sizes]\label{cor:bernstein}
	Let $f$ be bounded with oscillation $\osc(f) \triangleq \sup f-\inf f$, and let $t>0$,
	$\varrho\in(0,1)$. Then
	\begin{equation}\label{eq:bernstein}
		\PP\big(|\widehat J^{\eps}_N-\mathrm{I}^{\eps}|\ge t\big)\le
		2\exp\Big(-\frac{Nt^{2}}{2\sigma^{2}_{J^{\eps}}+\frac43\osc(f)\,t}\Big).
	\end{equation}
	Hence $|\widehat J^{\eps}_N-\mathrm{I}^{\eps}|<t$ with probability at least $1-\varrho$ as soon as
	\begin{equation}\label{eq:NJ}
		N\ge\Big(\frac{2\sigma^{2}_{J^{\eps}}}{t^{2}}+\frac{4\osc(f)}{3t}\Big)\log\frac2\varrho,
		\qquad\text{against}\qquad
		N\ge\Big(\frac{2\sigma^{2}_{\mathrm{I}^{\eps}}}{t^{2}}+\frac{2\osc(f)}{3t}\Big)\log\frac2\varrho
	\end{equation}
	for $\widehat{ \mathrm{I}}^{\eps}_N$. 
	Under the assumptions of   \rm{\Cref{thm:main}}, 
	the leading term $2\sigma^{2}_{J^{\eps}}t^{-2}$ of the first bound is at most $2\mathcal R_f(\eps)t^{-2}$, 
	with $\mathcal R_f$ as in \rm{\Cref{cor:vr}}, 
	whereas that of the second tends to $2\Var(F^{0})\,t^{-2}$.
\end{corollary}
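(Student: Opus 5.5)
The plan is to apply Bernstein's inequality for i.i.d.\ bounded centered variables to the summands of $\widehat J^{\eps}_N$, then invert the resulting tail bound to get the sample sizes.

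\textbf{Step 1: the tail bound \eqref{eq:bernstein}.} Set $Y_i\triangleq F^{\eps,i}-F^{0,i}$. These are i.i.d.\ with mean $\mathrm{I}^{\eps}-\mathrm{I}^{0}$ and variance $\sigma^{2}_{J^{\eps}}$. We have
\[
\widehat J^{\eps}_N-\mathrm{I}^{\eps}=\frac1N\sum_{i=1}^{N}\big(Y_i-\E Y_i\big).
\]
Since $F^{\eps,i},F^{0,i}\in[\inf f,\sup f]$, each $Y_i$ lies in $[-\osc(f),\osc(f)]$, and so does $\E Y_i$. Hence $|Y_i-\E Y_i|\le 2\osc(f)$. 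Bernstein's inequality for centered variables bounded by $M$ reads
\[
\PP\Big(\Big|\sum_i (Y_i-\E Y_i)\Big|\ge Nt\Big)\le 2\exp\Big(-\frac{N^{2}t^{2}}{2N\sigma^{2}+\tfrac23 M N t}\Big).
\]
With $M=2\osc(f)$ this is exactly \eqref{eq:bernstein}. For $\widehat{\mathrm I}^{\eps}_N$ the summands $F^{\eps,i}-\mathrm{I}^{\eps}$ satisfy $|F^{\eps,i}-\mathrm{I}^{\eps}|\le\osc(f)$. The same inequality therefore holds with $\sigma^{2}_{\mathrm{I}^{\eps}}$ in place of $\sigma^{2}_{J^{\eps}}$ and $\tfrac23\osc(f)t$ in place of $\tfrac43\osc(f)t$. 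This factor $2$ in the range is the only source of the different linear terms in \eqref{eq:NJ}, and it is the one place needing care.

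\textbf{Step 2: the sample sizes \eqref{eq:NJ}.} Requiring the right-hand side of \eqref{eq:bernstein} to be at most $\varrho$ is equivalent to
\[
N t^{2}\ge\big(2\sigma^{2}_{J^{\eps}}+\tfrac43\osc(f)t\big)\log(2/\varrho).
\]
Dividing by $t^{2}$ gives the first condition in \eqref{eq:NJ}. The second condition follows in the same way from the Bernstein bound for $\widehat{\mathrm I}^{\eps}_N$.

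\textbf{Step 3: comparing the leading terms.} By \eqref{eq:main} in \Cref{thm:main}, $\sigma^{2}_{J^{\eps}}\le\E[(D^{\eps})^{2}]\le\mathcal R_f(\eps)$, so the leading term of the first bound is at most $2\mathcal R_f(\eps)t^{-2}$. For the second, \Cref{cor:vr} gives $|\sigma_{\mathrm I^{\eps}}-\sigma_0|\le\sigma_{J^{\eps}}\le\mathcal R_f(\eps)^{1/2}\to0$. Hence $\sigma^{2}_{\mathrm I^{\eps}}\to\Var(F^{0})$, and the leading term $2\sigma^{2}_{\mathrm I^{\eps}}t^{-2}$ tends to $2\Var(F^{0})t^{-2}$.

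The argument is routine throughout. The only point requiring attention is the bookkeeping of the almost-sure range of the centered summands in Step 1.
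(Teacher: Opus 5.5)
Your proposal is correct and follows the paper's proof. You apply Bernstein's inequality to the i.i.d.\ summands $F^{\eps,i}-F^{0,i}$ with centered range $2\osc(f)$, against $\osc(f)$ for plain Monte Carlo, invert the tail bound, and compare the leading terms via \Cref{thm:main} and the Minkowski bound $|\sigma_{\mathrm{I}^{\eps}}-\sigma_0|\le\sigma_{J^{\eps}}$; you only spell out the inversion and the leading-term comparison that the paper leaves implicit.
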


\section{Proofs of the main results}\label{sec:proofs} 
In this section, Assumptions  \ref{ass:vel}, \ref{ass:colo}, and \ref{ass:nondeg} are assumed to hold, 
and $D^{\eps}$, $Z^{\eps}$, $\vth$ and $\eps_0$ are as in Section~\ref{sec:main}. 
Besides \eqref{eq:C1}, we use the bound $M$ on the density of $V^{0}(T)$ given by \Cref{prop:density}, 
and the Gaussian tails of the coupling error at the scale $\vth$ (\Cref{cor:coupling}). 
For
$u\in\R$ let
\[
E_u \triangleq \big\{u\in\seg{V^{0}(T)}{V^{\eps}(T)}\big\}\subset\big\{|V^{0}(T)-u|\le|Z^{\eps}|\big\};
\]
the map $(\omega,u)\mapsto\1_{E_u}(\omega)$ is $\mathcal F\otimes\mathcal{B}(\R)$-measurable,
since $E_u=\{V^{0}(T)\wedge V^{\eps}(T)\le u\le V^{0}(T)\vee V^{\eps}(T)\}$. For every
$\theta>0$, $E_u\subset\{|Z^{\eps}|>\theta\}\cup\{|V^{0}(T)-u|\le\theta\}$, and therefore,
by Proposition~\ref{prop:density},
\begin{equation}\label{eq:Eu-split}
	\PP(E_u)\le\PP\big(|Z^{\eps}|>\theta\big)+2M\theta,\qquad u\in\R,\ \theta>0 .
\end{equation}
If $f\in\BVJ$, 
the crossing inequality $|f(x)-f(y)|\le|\mu_f|(\seg{x}{y})$ of \Cref{lem:crossing} gives the pathwise bounds
\begin{equation}\label{eq:D-level}
	|D^{\eps}|\le\int_\R\1_{E_u}\,|\mu_f|(\dd u),\qquad
	(D^{\eps})^{2}\le\TV(f)\int_\R\1_{E_u}\,|\mu_f|(\dd u),
\end{equation}
and, by Tonelli's theorem and $|\mu_f|(\R)\le\TV(f)$,
\begin{equation}\label{eq:D-level-exp}
	\E[|D^{\eps}|]\le\TV(f)\sup_{u\in\R}\PP(E_u),\qquad
	\E\big[(D^{\eps})^{2}\big]\le\TV(f)^{2}\sup_{u\in\R}\PP(E_u).
\end{equation}

\subsection{Proof of Theorem~\ref{thm:main}}\label{sec:proof-main}
The inequalities \eqref{eq:main} follow from \eqref{eq:variances}, with $\lambda=1$, and
$\Var(D^{\eps})\le\E[(D^{\eps})^{2}]$. Recall that $\eps\mapsto\vth$ is increasing on
$(0,1]$ with $\vartheta_1=1$, so that $\eps\le\vth\le1$ for $\eps\in(0,1]$. For such $\eps$
and all $p,q>0$, \Cref{cor:coupling}(iii)--(iv) gives, since
$|Z^{\eps}|\le\sup_{t\le T}|Z^{\eps}(t)|$,
\begin{equation}\label{eq:Z-moments}
	\E[|Z^{\eps}|^{p}]\le\mathsf C_p\vth^{p},\qquad
	\PP\big(|Z^{\eps}|>\mathsf K_q\vth\big)\le(T+1)\,C_0\,\eps^{q}.
\end{equation}

(a) Since $|D^{\eps}|\le[f]_\alpha|Z^{\eps}|^{\alpha}$ and $u\mapsto u^{\alpha}$ is concave on
$[0,\infty)$, Jensen's inequality and \eqref{eq:C1} give, for $\eps\in(0,\eps_{\rm GM}]$,
$\E[(D^{\eps})^{2}]\le[f]_\alpha^{2}(\E[|Z^{\eps}|^{2}])^{\alpha}\le[f]_\alpha^{2}C_1^{\alpha}\eps^{2\alpha}$.
Neither \Cref{ass:nondeg} nor Proposition~\ref{prop:density} is used.

(b) Take $\theta=\mathsf K_1\vth$ in \eqref{eq:Eu-split}. By \eqref{eq:Z-moments},
$\PP(E_u)\le(T+1)C_0\eps+2M\mathsf K_1\vth\le\big((T+1)C_0+2M\mathsf K_1\big)\vth$ for all
$u$, and \eqref{eq:D-level-exp} gives the claim.

(c) Combining $|D^{\eps}|\le[f]_\alpha|Z^{\eps}|^{\alpha}$ with \eqref{eq:D-level},
$(D^{\eps})^{2}=|D^{\eps}|\,|D^{\eps}|\le[f]_\alpha|Z^{\eps}|^{\alpha}\int_\R\1_{E_u}|\mu_f|(\dd u)$,
so that, by Tonelli's theorem,
$\E[(D^{\eps})^{2}]\le[f]_\alpha\TV(f)\sup_{u}\E\big[|Z^{\eps}|^{\alpha}\1_{E_u}\big]$.
Let $\theta=\mathsf K_2\vth$. On $E_u\cap\{|Z^{\eps}|\le\theta\}$ we have
$|V^{0}(T)-u|\le\theta$; hence, using the Cauchy--Schwarz inequality on
$\{|Z^{\eps}|>\theta\}$,
\[
\begin{aligned}
	\E\big[|Z^{\eps}|^{\alpha}\1_{E_u}\big]&\le\theta^{\alpha}\,\PP\big(|V^{0}(T)-u|\le\theta\big)
	+\big(\E[|Z^{\eps}|^{2\alpha}]\big)^{1/2}\PP\big(|Z^{\eps}|>\theta\big)^{1/2}\\
	&\le2M\mathsf K_2^{1+\alpha}\vth^{1+\alpha}+\mathsf C_{2\alpha}^{1/2}\vth^{\alpha}\big((T+1)C_0\big)^{1/2}\eps ,
\end{aligned}
\]
by \eqref{eq:Z-moments} with $q=2$, which is bounded by $C\vth^{1+\alpha}$ because
$\eps\le\vth$.

(d) Let $f\in\Cusp$, $\theta \triangleq \mathsf K_4\vth$ and $\Upsilon \triangleq \dist(V^{0}(T),S)$. By
Proposition~\ref{prop:density}, for $x\ge0$ and $h>0$,
$\PP(\Upsilon\in[x,x+h])\le\sum_{\varsigma\in S}\PP(|V^{0}(T)-\varsigma|\in[x,x+h])\le2|S|Mh$,
i.e., the law of $\Upsilon$ has a density bounded by $2|S|M$. We split
$\E[(D^{\eps})^{2}]=\mathrm{(i)}+\mathrm{(ii)}+\mathrm{(iii)}$ according to the events
$\{|Z^{\eps}|>\theta\}$, $\{|Z^{\eps}|\le\theta,\ \Upsilon\le2\theta\}$, and
$\{|Z^{\eps}|\le\theta,\ \Upsilon>2\theta\}$. By Lemma~\ref{lem:cusp}(ii),
$|D^{\eps}|\le c_{\alpha,S}[f]_{\alpha,S}|Z^{\eps}|^{\alpha}$. Hence, by the Cauchy--Schwarz
inequality and \eqref{eq:Z-moments} with $q=4$,
\[
\mathrm{(i)}\le c_{\alpha,S}^{2}[f]_{\alpha,S}^{2}\big(\E[|Z^{\eps}|^{4\alpha}]\big)^{1/2}
\PP(|Z^{\eps}|>\theta)^{1/2}\le C\,c_{\alpha,S}^{2}[f]_{\alpha,S}^{2}\,\vth^{2\alpha}\eps^{2}
\le C\,c_{\alpha,S}^{2}[f]_{\alpha,S}^{2}\,\vth^{2+2\alpha},
\]
and
$\mathrm{(ii)}\le c_{\alpha,S}^{2}[f]_{\alpha,S}^{2}\,\theta^{2\alpha}\,\PP(\Upsilon\le2\theta)
\le4|S|M\,c_{\alpha,S}^{2}[f]_{\alpha,S}^{2}\,\theta^{1+2\alpha}$.
On the event of (iii), every point $u$ of the segment $\seg{V^{0}(T)}{V^{\eps}(T)}$
satisfies $\dist(u,S)\ge\Upsilon-|Z^{\eps}|\ge\Upsilon/2$, so \Cref{lem:cusp}(i) gives
$|D^{\eps}|\le[f]_{\alpha,S}(\Upsilon/2)^{\alpha-1}|Z^{\eps}|\le2[f]_{\alpha,S}\Upsilon^{\alpha-1}\theta$.
Since $\Upsilon^{2\alpha-2}\le1$ for $\Upsilon\ge1$,
\[
\mathrm{(iii)}\le4[f]_{\alpha,S}^{2}\,\theta^{2}\,\E\big[\Upsilon^{2\alpha-2}\1\{\Upsilon>2\theta\}\big]
\le4[f]_{\alpha,S}^{2}\,\theta^{2}\Big(1+2|S|M\int_{2\theta\wedge1}^{1}x^{2\alpha-2}\dd x\Big).
\]
The last integral vanishes if $2\theta\ge1$; otherwise it is at most
$(2\theta)^{2\alpha-1}/(1-2\alpha)$ if $\alpha<1/2$, $\log(1/(2\theta))$ if $\alpha=1/2$, and
$1/(2\alpha-1)$ if $\alpha>1/2$. Recall that $\theta=\mathsf K_4\vth$ with $\eps\le\vth\le1$,
so that $\log(1/(2\theta))\le\log_+(1/(2\mathsf K_4))+\log(1/\eps)$. Collecting the three
terms gives $\E[(D^{\eps})^{2}]\le C_{\alpha,S}[f]_{\alpha,S}^{2}\psi_\alpha(\eps)$: for
$\alpha<1/2$ we use $\theta^{2}(2\theta)^{2\alpha-1}\le\theta^{1+2\alpha}$ and
$\vth^{2}\le\vth^{1+2\alpha}$, and for $\alpha\ge1/2$ we use $\vth^{1+2\alpha}\le\vth^{2}$.
Only $\eps\in(0,1]$ was used in (b)--(d). \qed

\subsection{Proofs of the corollaries}\label{pf:coros}
\begin{proof}[Proof of \Cref{cor:bias}]
	$|\mathrm{I}^{\eps}-\mathrm{I}^{0}|=|\E [D^{\eps}]|\le\E[|D^{\eps}|]$. 
	(a) By Jensen's inequality and \eqref{eq:C1},
	$\E[|D^{\eps}|] \le [f]_\alpha\E[|Z^{\eps}|^{\alpha}] \le [f]_\alpha(\E[|Z^{\eps}|^{2}])^{\alpha/2}
	\le[f]_\alpha C_1^{\alpha/2}\eps^{\alpha}$.
	(b) By \eqref{eq:D-level-exp}, $\E[|D^{\eps}|]\le\TV(f)\sup_u\PP(E_u)$, and the bound
	$\sup_u\PP(E_u)\le((T+1)C_0+2M\mathsf K_1)\vth$ obtained in the proof of
	\Cref{thm:main}(b) gives the claim. 
	(c) Split $\E[|D^{\eps}|]$ according to the
	same three events as in the proof of \Cref{thm:main}(d). The three contributions
	are bounded, respectively, by
	\[
	c_{\alpha,S}[f]_{\alpha,S}\big(\E[|Z^{\eps}|^{2\alpha}]\big)^{1/2}\PP(|Z^{\eps}|>\theta)^{1/2}
	\le C\,c_{\alpha,S}[f]_{\alpha,S}\,\vth^{\alpha}\eps^{2},\qquad
	c_{\alpha,S}[f]_{\alpha,S}\,\theta^{\alpha}\cdot4|S|M\theta,
	\]
	and
	$2[f]_{\alpha,S}\,\theta\,\E[\Upsilon^{\alpha-1}\1_{\{\Upsilon>2\theta\}}]\le2[f]_{\alpha,S}\,\theta\,
	(1+2|S|M/\alpha)$, because $\int_0^{1}x^{\alpha-1}\dd x=1/\alpha$. Since $\eps\le\vth\le1$
	and $\theta=\mathsf K_4\vth$, the sum is at most $C'_{\alpha,S}[f]_{\alpha,S}\vth$.
\end{proof}

\begin{proof}[Proof of Corollary~\ref{cor:vr}]
	By Minkowski's inequality in $L^{2}$,
	$|\sigma_{\mathrm{I}^{\eps}}-\sigma_0|=\big|\|F^{\eps}-\E [F^{\eps}]\|_2-\|F^{0}-\E [F^{0}]\|_2\big|
	\le\|D^{\eps}-\E [D^{\eps}]\|_2=\sigma_{J^{\eps}}$. If $\mathcal R_f(\eps)\le\sigma_0^{2}/4$,
	then $\sigma_{J^{\eps}}\le\sigma_0/2$ by \Cref{thm:main}, 
	hence $\sigma_{\mathrm{I}^{\eps}}\ge\sigma_0/2$ and
	$\sigma^{2}_{J^{\eps}}/\sigma^{2}_{\mathrm{I}^{\eps}}\le4\mathcal R_f(\eps)/\sigma_0^{2}$. 
	The identity $\sigma^{2}_{K^{\eps}}/\sigma^{2}_{\mathrm{I}^{\eps}}=1-(\rho^{\eps})^{2}$ is \eqref{eq:sigmaK}, 
	the inequality $\sigma^{2}_{K^{\eps}}\le\sigma^{2}_{J^{\eps}}$ follows from \eqref{eq:variances},
	and \eqref{eq:sigmaK} also gives
	$\sigma_0^{2}(1-\lambda^{\eps})^{2}=\sigma^{2}_{J^{\eps}}-\sigma^{2}_{K^{\eps}}\le\sigma^{2}_{J^{\eps}}$.
\end{proof}

\begin{proof}[Proof of Corollary~\ref{cor:bernstein}]
	The summands $D^{\eps,i}=F^{\eps,i}-F^{0,i}$ of $\widehat J^{\eps}_N-\mathrm{I}^{0}$ are i.i.d.\ with
	variance $\sigma^{2}_{J^{\eps}}$ and take values in $[-\osc(f),\osc(f)]$, 
	so that $|D^{\eps,i}-\E [D^{\eps}]|\le2\osc(f)$. 
	For averages of $N$ i.i.d.\ variables with variance
	$\sigma^{2}_{J^{\eps}}$ and centered values bounded by $2\osc(f)$, 
	Bernstein's inequality \cite[Chapter~2]{BLM13} is \eqref{eq:bernstein}, 
	whose right-hand side is at most $\varrho$ under the first condition in \eqref{eq:NJ}. 
	For $\widehat {\mathrm{I}}^{\eps}_N$ the summands lie in
	$[\inf f,\sup f]$, so Bernstein's inequality holds with $\sigma^{2}_{\mathrm{I}^{\eps}}$ in place of
	$\sigma^{2}_{J^{\eps}}$ and $\osc(f)$ in place of $2\osc(f)$, 
	which gives the second condition.
\end{proof}

\section{Numerical experiments}\label{sec:numerics}
\subsection{Test problems and predicted exponents}\label{sec:testproblems} 
To test the exponents of Section~\ref{sec:main},
measure the variance reduction, and check
that time discretization does not affect these conclusions,
we take $\sigma=\sqrt2$, $v_0=0.5$, $T=1$, $v_f=2$, $\Delta=1$, $\gamma=1$ in regimes
(II)--(III), $\ell=0.5$ in regime (III) (a stick-phase value),
and the scalar noise $d=d'=1$,
$\mathbf{A}=\mathbf{K}=\mathbf{r}=1$;
then $\mathfrak{s}=\sigma=\sqrt2$, \Cref{ass:nondeg} holds,
and all the scaling of \cite{TVJ10} reduce to
the identity.
The observables are $f_{\rm ind}$, $f_\alpha$ for $\alpha\in\{1/4,1/3,1/2,3/4,1\}$,
and $f_{\rm sm}$ (Example~\ref{ex:observables}), the seven test observables, and, in
	Figure~\ref{fig:rates}(b), $f_\alpha$ for $\alpha\in\{0.1,0.2,0.3,0.4,0.6,0.7,0.8,0.9\}$.
By \Cref{thm:main},
the exponent of $\mathcal D_2(\eps) \triangleq \E[(D^{\eps})^{2}]$ is, up to logarithms,
at least $1$ for $f_{\rm ind}$ and $\min\{2,1+2\alpha\}$ for $f_\alpha$, and $2$ for $f_1$ and $f_{\rm sm}$.
For $f_{\rm ind}$ and for $f_\alpha$ with $\alpha<1$, these exponents exceed the
benchmark $\max\{2\alpha,2/3\}$ given by \eqref{eq:C1} and Avikainen's inequality \cite{Avi09},
so that the experiment separates the two. They are exponents of upper bounds, but we expect
them to be attained, because $|Z^{\eps}(T)|$ is of exact order $\eps$ and $p_T(\pm v_f)>0$.
For the coupling error, \eqref{eq:C1} gives the exponent $2$ for $\E[|Z^{\eps}(T)|^{2}]$, and
\Cref{cor:coupling}(iii) the exponent $2$ up to a logarithm for
$\E[\sup_{t\le T}|Z^{\eps}(t)|^{2}]$.
Smaller measured exponents would indicate a pre-asymptotic regime or an error.

\subsection{Coupled time discretization}\label{sec:scheme}
Let $\delta t=T/n$ and let $\boldsymbol{\zeta}_0,\boldsymbol{\zeta}_1,\dots$ be i.i.d.\ $\mathcal N(0,I_{d'})$ vectors,
independent of $\boldsymbol{\eta}_0\sim\mathcal N(0,\mathbf C)$. 
The OU process is advanced by the Euler scheme, 
and both forcing increments are built from the same Gaussian vectors:
\begin{equation*}\label{eq:OU-discrete}
	\boldsymbol{\eta}_{m+1}
	=\boldsymbol{\eta}_m-\frac{\delta t}{\eps^{2}}\mathbf{A}\boldsymbol{\eta}_m+\frac{\sqrt{\delta t}}{\eps}\mathbf{K}\boldsymbol{\zeta}_m,\qquad
	\delta N^{\eps}_m \triangleq \frac{\sigma\,\delta t}{\eps}\,\mathbf{r}\cdot\boldsymbol{\eta}_m,\qquad
	\delta N^{0}_m \triangleq \sigma\sqrt{\delta t}\,\mathbf{c}^{\top}\mathbf{K}\boldsymbol{\zeta}_m .
\end{equation*}
Multiplying the first relation by $\eps\sigma \mathbf{r}^{\top}\mathbf{A}^{-1}$ gives
$\delta N^{\eps}_m=\delta N^{0}_m-\eps \sigma \mathbf{c}^{\top}(\boldsymbol{\eta}_{m+1}-\boldsymbol{\eta}_m)$, 
hence the exact discrete analogue of \eqref{eq:integrated},
\begin{equation}\label{eq:discrete-identity}
	h_m \triangleq \sum_{j<m}\big(\delta N^{\eps}_j-\delta N^{0}_j\big)=-\sigma\eps\,\mathbf{c}^{\top}(\boldsymbol{\eta}_m-\boldsymbol{\eta}_0),
	\qquad m\ge0 .
\end{equation}
The velocities are advanced by the backward Euler (proximal) step for $U$,
\begin{equation}\label{eq:BE}
	V_{m+1}=\operatorname{prox}_{\delta t\,U}\big(V_m+\delta N_m\big)
	=\frac{\operatorname{soft}_{\Delta\delta t}\big(V_m+\ell\,\delta t+\delta N_m\big)}{1+\gamma\,\delta t},
	\qquad \operatorname{soft}_{\tau}(x) \triangleq \sgn(x)\,(|x|-\tau)_+,
\end{equation}
with $\delta N_m=\delta N^{\eps}_m$ for $V^{\eps}$, $\delta N_m=\delta N^{0}_m$ for $V^{0}$, and $V_0=v_0$; 
equivalently, $V_{m+1}=V_m+\delta N_m+\delta t\,(\bb(V_{m+1})-k_{m+1})$ with $k_{m+1}\in\partial\varphi(V_{m+1})$. 
For $V^{0}$ this is the backward Euler--Maruyama method for the stochastic gradient flow of the convex potential $U$ \cite{EKKL22}; 
for $V^{\eps}$ it is the implicit Euler scheme for a maximal monotone differential inclusion \cite{Bas13}. 
We take $n=\lceil T/\min\{10^{-3},\eps^{2}/50\}\rceil$, so that $n\,\delta t=T$ and
$\delta t\,\mathbf{A}/\eps^{2}\le1/50$ for every $\eps$ in the test problem. 
The algorithm, this choice of $\delta t$, and a semi-implicit variant to which \Cref{prop:discrete} also applies are
detailed in Appendix~\ref{app:implementation}.

\begin{proposition}[Discrete comparison principle]\label{prop:discrete}
	Let $(V^{\eps}_m)$ and $(V^{0}_m)$ be generated by \eqref{eq:BE} from the same
	$(\boldsymbol{\eta}_0,\boldsymbol{\zeta}_0,\boldsymbol{\zeta}_1,\dots)$. Then, for every $n\ge0$,
	\begin{equation}\label{eq:discrete-comparison}
		\max_{m\le n}|V^{\eps}_m-V^{0}_m|\le2\max_{m\le n}|h_m|\le4\kappa_0\,\eps\max_{m\le n}|\boldsymbol{\eta}_m| .
	\end{equation}
\end{proposition}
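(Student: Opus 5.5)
The second inequality is immediate. By \eqref{eq:discrete-identity}, $|h_m|\le\sigma\eps|\mathbf{c}|(|\boldsymbol{\eta}_m|+|\boldsymbol{\eta}_0|)\le2\kappa_0\eps\max_{j\le n}|\boldsymbol{\eta}_j|$ for $m\le n$. So the work is the first inequality. I will prove it as a discrete version of the continuous comparison principle (\Cref{prop:comparison}). The idea is to compare $V^{\eps}$ with the two shifted sequences $V^{0}\pm$ (something controlled by $h$), using only that the proximal map $P\triangleq\operatorname{prox}_{\delta t\,U}$ is nondecreasing and $1$-Lipschitz. Both facts hold because $P$ is the composition of $\operatorname{soft}_{\Delta\delta t}(\cdot+\ell\delta t)$ with division by $1+\gamma\delta t\ge1$.

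\emph{Key step: a shifted recursion.} Set $Y_m\triangleq V^{\eps}_m-h_m$. Since $h_{m+1}=h_m+\delta N^{\eps}_m-\delta N^{0}_m$, we get
\[
V^{\eps}_{m+1}=P\big(V^{\eps}_m+\delta N^{\eps}_m\big)=P\big(Y_m+\delta N^{0}_m+h_{m+1}\big),
\]
and hence
\[
Y_{m+1}=P\big(Y_m+\delta N^{0}_m+h_{m+1}\big)-h_{m+1}.
\]
I then compare $Y$ with $V^{0}$, which satisfies $V^{0}_{m+1}=P(V^{0}_m+\delta N^{0}_m)$. Write $H_m\triangleq\max_{j\le m}|h_j|$. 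I claim by induction that $|Y_m-V^{0}_m|\le H_m$, i.e.\ $V^{0}_m-H_m\le Y_m\le V^{0}_m+H_m$. The base case holds since $Y_0=V^{0}_0=v_0$ and $h_0=0$.

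For the upper bound in the inductive step, monotonicity of $P$ gives
\[
Y_{m+1}\le P\big(V^{0}_m+\delta N^{0}_m+H_m+h_{m+1}\big)-h_{m+1}.
\]
The $1$-Lipschitz property then gives
\[
Y_{m+1}\le V^{0}_{m+1}+|H_m+h_{m+1}|+|h_{m+1}|,
\]
which is too crude: it doubles at every step. \textbf{This is the main obstacle.} Naive Lipschitz bounds accumulate, so I need the sharper property that $x\mapsto P(x+a)-a$ lies between $P(x)$ and $P(x)+a$ for $a\ge0$, and symmetrically for $a<0$. In other words, $0\le P(x+a)-P(x)\le a$, which is monotonicity plus $1$-Lipschitz used one-sidedly.

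With this property the step closes. Put $a=H_m+h_{m+1}\ge0$. Then
\[
P\big(V^{0}_m+\delta N^{0}_m+a\big)\le V^{0}_{m+1}+a,
\]
so $Y_{m+1}\le V^{0}_{m+1}+H_m\le V^{0}_{m+1}+H_{m+1}$. The lower bound follows symmetrically with $a=h_{m+1}-H_m\le0$, using $P(x+a)\ge P(x)+a$.

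Finally, $|V^{\eps}_m-V^{0}_m|\le|Y_m-V^{0}_m|+|h_m|\le H_m+|h_m|\le2H_n$ for $m\le n$, which is the first inequality in \eqref{eq:discrete-comparison}. The semi-implicit variant would follow the same way, provided its one-step map is again nondecreasing and $1$-Lipschitz.
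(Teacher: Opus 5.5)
Your decomposition is the paper's: $Y_m-V^{0}_m=V^{\eps}_m-V^{0}_m-h_m$ is exactly its $\Lambda_m$. The induction hypothesis $|\Lambda_m|\le H_m$ and the final triangle inequality are also the same. However, \textbf{one step of your inductive argument fails as written.}

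You set $a=H_m+h_{m+1}$ and assert $a\ge0$, and for the lower bound you assert $h_{m+1}-H_m\le0$. Both are false whenever $|h_{m+1}|>H_m$ with the unfavourable sign, i.e.\ exactly when the running maximum of $|h|$ increases at step $m+1$. In that case the intermediate bound $Y_{m+1}\le V^{0}_{m+1}+H_m$ does not follow from the induction hypothesis. For instance, take regime~(I) with $\tau\triangleq\Delta\delta t$, $x\triangleq V^{0}_m+\delta N^{0}_m=2\tau$, $Y_m=V^{0}_m+\tau/2$, $H_m=\tau/2$ and $h_{m+1}=-2\tau$. Then $Y_{m+1}=\operatorname{soft}_\tau(\tau/2)+2\tau=2\tau$ and $V^{0}_{m+1}=\tau$, so $Y_{m+1}-V^{0}_{m+1}=\tau>H_m$.

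The repair is one line. Weaken the hypothesis to $Y_m\le V^{0}_m+H_{m+1}$ before using monotonicity. Then $a\triangleq H_{m+1}+h_{m+1}\ge0$ always, and $Y_{m+1}\le P(x+a)-h_{m+1}\le V^{0}_{m+1}+a-h_{m+1}=V^{0}_{m+1}+H_{m+1}$. The lower bound is symmetric with $a=h_{m+1}-H_{m+1}\le0$. Equivalently, $P(x+a)-P(x)\le a_+$ for all $a$ gives $Y_{m+1}-V^{0}_{m+1}\le\max\{H_m,-h_{m+1}\}\le H_{m+1}$. This is precisely why the paper states its contradiction hypothesis as $\Lambda_{m+1}>H_{m+1}$ rather than $\Lambda_{m+1}>H_m$. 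With this fix your proof is complete.

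The remaining difference from the paper is the one-step mechanism. The paper works with the implicit form $\Lambda_{m+1}=\Lambda_m-\delta t\,(g^{\eps}_{m+1}-g^{0}_{m+1})$, with $g_{m+1}\in\partial U(V_{m+1})$, and uses monotonicity of $\partial U$. If $\Lambda_{m+1}>H_{m+1}$, then $V^{\eps}_{m+1}>V^{0}_{m+1}$, hence $\Lambda_{m+1}\le\Lambda_m$. This is a line-by-line discretization of the proof of \Cref{prop:comparison}. You instead use that the explicit map $P=\operatorname{prox}_{\delta t\,U}$ is nondecreasing and $1$-Lipschitz. On $\R$ these are the same fact, since $P=(\mathrm{Id}+\delta t\,\partial U)^{-1}$. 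Your formulation treats the scheme as a black box, with no subgradient selections, and applies to any update $V_{m+1}=P(V_m+\delta N_m)$ with such a $P$. The paper's formulation makes the link with the continuous-time argument explicit, and its contradiction hypothesis handles the case $|h_{m+1}|>H_m$ automatically.

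Your closing remark on the semi-implicit variant is a little quick. There the state enters with factor $1-\gamma\delta t$ and the forcing with factor $1$, so your shifted recursion picks up an extra $-\gamma\delta t\,h_m$ inside $\operatorname{soft}$. That term must be absorbed using $\gamma\delta t\le1$ and $|h_m|\le H_m$, as the paper does.
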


\begin{proof}
	Let $N_m \triangleq \sum_{j<m}\delta N_j$ for each process, $Y_m \triangleq V_m-N_m$,
	$\Lambda_m \triangleq Y^{\eps}_m-Y^{0}_m$, so that $\Lambda_0=0$ and
	$V^{\eps}_m-V^{0}_m=\Lambda_m+h_m$, and let $H_m \triangleq \max_{j\le m}|h_j|$. 
	We show by induction
	that $|\Lambda_m|\le H_m$, which gives the first inequality in
	\eqref{eq:discrete-comparison}; the second follows from \eqref{eq:discrete-identity}. 
	Here $Y_{m+1}=Y_m-\delta t\,g_{m+1}$ with $g_{m+1} \triangleq k_{m+1}-\bb(V_{m+1})\in\partial U(V_{m+1})$.
	Assume $|\Lambda_m|\le H_m$ and suppose $\Lambda_{m+1}>H_{m+1}$. 
	Then $V^{\eps}_{m+1}-V^{0}_{m+1}=\Lambda_{m+1}+h_{m+1}>0$, 
	so $g^{\eps}_{m+1}\ge g^{0}_{m+1}$ by monotonicity of $\partial U$, and
	$\Lambda_{m+1}=\Lambda_m-\delta t(g^{\eps}_{m+1}-g^{0}_{m+1})\le\Lambda_m\le H_m\le H_{m+1}$, 
	a contradiction. Symmetrically $\Lambda_{m+1}\ge-H_{m+1}$.
\end{proof}

Thus the coupling error of the simulated pair is bounded uniformly in $\delta t$; the strong
discretization error of each component (only known to be of order at least $\delta t^{1/4}$
for $V^{0}$ \cite{EKKL22}) enters only the bias of the estimators, which the refinement test
below monitors. 
The first inequality in \eqref{eq:discrete-comparison} is checked on every simulated path.

\subsection{Reference values and statistical methodology}\label{sec:ref-values}
\paragraph{Reference values.} 
The value $\mathrm{I}^{0}=\int_\R f\,p_T$ is computed by adaptive quadrature from the closed form \eqref{eq:pI} in regime (I) 
and from the spectral series of \cite{TVJ10}, truncated at $40$ modes, in regimes (II)--(III). 
This density is cross-validated against the numerical inversion of the Laplace transform of \cite{TPJ12}, 
against a finite-volume solution of the Fokker--Planck equation, 
and by simulation (Appendix~\ref{app:implementation} and Table~\ref{tab:ref}): 
the spectral and Laplace representations agree to $10^{-15}$, 
and the reference values agree with their finite-volume approximations to $2\cdot10^{-7}$. 
The error of $\mathrm{I}^{0}$, a bias of $\widehat J^{\eps}_N$ and $\widehat K^{\eps}_N$, 
is therefore well below the standard errors, which are of order $\sigma_{J^{\eps}}N^{-1/2}$.

\paragraph{Measured quantities.} 
(1) $\E[|Z^{\eps}(T)|^{2}]$ and $\E[\max_{m\le n}|Z^{\eps}_m|^{2}]$,
the discrete counterpart of $\E[\sup_{t\le T}|Z^{\eps}(t)|^{2}]$, 
which test \eqref{eq:C1} and \Cref{cor:coupling}.  
(2) $\mathcal D_2(\eps)$ for each observable, 
with least-squares slopes and local slopes $r_i$ of $\log\mathcal D_2$ against
$\log\eps$ between consecutive grid values (Appendix~\ref{app:implementation}); 
the smooth anchor calibrates the finite-grid deficit shared by all observables. 
(3) The variance ratios $\sigma^{2}_{J^{\eps}}/\sigma^{2}_{\mathrm{I}^{\eps}}$ 
and $\sigma^{2}_{K^{\eps}}/\sigma^{2}_{\mathrm{I}^{\eps}}=1-(\rho^{\eps})^{2}$ 
and $\widehat\lambda^{\eps}_N$ (Corollary~\ref{cor:vr}). 
(4) The coupled mean $\E[D^{\eps}]$, 
an unbiased low-variance estimator of $\mathrm{I}^{\eps}-\mathrm{I}^{0}$ (Corollary~\ref{cor:bias}). 
(5) The ratio $\max_{m}|Z^{\eps}_m|/(2\max_{m}|h_m|)$, at most $1$ by \Cref{prop:discrete}, 
and the standardized difference between the sample mean of $f(V^{0}_n)$ and $\mathrm{I}^{0}$, 
which contains the weak error of the scheme for $V^{0}$.

\paragraph{Grid and sampling.} 
The rate study uses thirteen values of $\eps$: 
from $0.5$ to $0.2$ in steps of $0.05$, and $0.15$, $0.1$, $0.05$, $0.04$, $0.03$, $0.02$, 
supplemented by $\eps\in\{0.6,0.8,1\}$ to delimit the range in which the method is useful. 
Each cell (regime,~$\eps$) uses $N=2.4\cdot10^{5}$ coupled paths if $\eps\ge0.05$ and $N=4.8\cdot10^{5}$ otherwise, 
split into $\mathfrak{B}=12$ batches with independent random streams derived from a fixed master seed. 
Means, $\mathcal D_2$ and the estimators \eqref{eq:estimators} are computed from all $N$
paths, with the usual standard errors; 
variances, $\widehat\lambda^{\eps}_N$ and variance ratios have batch-means standard errors. 
The exponents are weighted least-squares slopes of
$\log\mathcal D_2$ against $\log\eps$ over the four values $\eps\le0.05$; 
those of the bias are fitted over the values $\eps\le 0.2$ at which $|\E [D^{\eps}]|$ exceeds three standard
errors, when there are at least three (Appendix~\ref{app:implementation}). 
The refinement test repeats the cells $\eps\in\{0.2,0.1,0.05\}$ with 
$n=\lceil T/\min\{10^{-3},\eps^{2}/400\}\rceil$ and $N=2.4\cdot10^{5}$ independent paths, 
and reports the differences in units of their combined standard error. 
In total, about $4.8\cdot10^{11}$ coupled path-steps are simulated.

\subsection{Results}\label{sec:results}
\paragraph{Coupling errors.} 
The fitted exponents of $\E[|Z^{\eps}(T)|^{2}]$ are $2.0002(35)$, $1.9970(33)$, 
and $2.001(5)$ in regimes (I)--(III) (Table~\ref{tab:fitted}), 
in agreement with the exponent $2$ of \eqref{eq:C1}. 
Those of $\E[\max_m|Z^{\eps}_m|^{2}]$, $1.760(7)$, $1.746(8)$, and $1.746(7)$, 
are smaller because of the logarithmic factor in \Cref{cor:coupling}(iii): 
the maximum of the OU process over the $T/\eps^{2}$ correlation times grows like
$(\log(1/\eps))^{1/2}$ \cite{GP00}, 
and the local exponent $2-1/(a+\log(1/\eps))$ of $\eps^{2}(a+\log(1/\eps))$, $a\ge0$, 
lies between $1.67$ and $2$ on $[0.02,0.05]$.

\paragraph{Exponents of $\mathcal D_2$.} 
For $f_{\rm ind}$, $f_1$ and $f_{\rm sm}$, the fitted exponents agree with \Cref{thm:main} 
to within $2.0$ standard errors (Table~\ref{tab:fitted}); 
in particular, the exponent for the crossing indicator is $1.08(4)$,
$0.83(10)$, and $1.02(10)$ in regimes (I)--(III). For $f_\alpha$ with $1/4\le\alpha\le3/4$, the
fitted exponents are at most $0.30$ below $\min\{2,1+2\alpha\}$, with the largest deficits at
$\alpha=1/2$ (Figure~\ref{fig:rates}(b)). Such deficits are expected on a finite window.
Heuristically, the first correction to the leading term of $\mathcal D_2(\eps)$ has relative order
$\eps^{|1-2\alpha|}$, which decays slowly for $\alpha$ near $1/2$, and for $\alpha=1/2$ the leading
term carries a logarithm. A model computation, in which $Z^{\eps}(T)$ is replaced by a centred
Gaussian variable independent of $V^{0}(T)$ with the measured variance
(Appendix~\ref{app:implementation}), 
gives on the same window exponents below $\min\{2,1+2\alpha\}$ by $0.04$ to $0.23$ for these $\alpha$, 
and by at most $0.02$ for $f_{\rm ind}$, $f_1$ and $f_{\rm sm}$; 
the fitted exponents of all seven observables agree with these model exponents to within $2.3$ standard errors. 
Finally, the fitted exponents exceed the benchmark $\max\{2\alpha,2/3\}$ in all $15$ cases in
which the benchmark is below the prediction, 
by more than three standard errors in $14$ of them 
(the exception is $f_{\rm ind}$ in regime~(II), with $1.7$ standard errors): 
$\mathcal D_2$ decays faster than the mean-square bound \eqref{eq:C1} alone can guarantee, as \Cref{thm:main} asserts.

\paragraph{Bias.} 
The coupled bias is resolved for all $21$ pairs of observable and regime,
with fitted exponents between $1.74$ and $2.08$. 
These exceed the exponent $1$ of \Cref{cor:bias} and are close to the exponent $2$ known for smooth observables \cite[Lemma~6.1]{GM22}, 
which suggests that the bias bounds of \Cref{cor:bias}(b)--(c) are not sharp for these examples. 
The white-noise approximation is nevertheless inaccurate at moderate $\eps$: 
for $f_{\rm ind}$ at $\eps=0.2$, 
$|\E [D^{\eps}]|$ is $12$\% to $34$\% of $\mathrm{I}^{\eps}$ (Table~\ref{tab:vr}), 
a bias that the control variate estimators remove.

\paragraph{Variance reduction.} 
For the crossing indicator (Table~\ref{tab:vr}),
$\sigma^{2}_{K^{\eps}}/\sigma^{2}_{\mathrm{I}^{\eps}}$ decreases from $0.99$--$0.998$
at $\eps=1$ to $0.0359$--$0.0588$ at $\eps=0.02$, 
roughly in proportion to $\eps$ for $\eps\le0.1$,  
as \Cref{thm:main}(b) and \Cref{cor:vr} predict, 
while $\widehat\lambda^{\eps}_N$ increases to $0.970$--$0.982$. 
At $\eps=0.02$, $\widehat K^{\eps}_N$ is therefore at least $8.51$ times cheaper
than plain Monte Carlo at equal accuracy. 
At $\eps=1$, $F^{\eps}$ and $F^{0}$ are
almost uncorrelated (in regime~(II), no path of $V^{\eps}$ reaches the threshold):
$\widehat J^{\eps}_N$ is useless, 
with $\sigma^{2}_{J^{\eps}}/\sigma^{2}_{\mathrm{I}^{\eps}}$ up to
$355$, whereas $\widehat\lambda^{\eps}_N\approx0$ turns $\widehat K^{\eps}_N$ into plain
Monte Carlo at twice the cost. 
For the smooth anchor,
$\sigma^{2}_{K^{\eps}}/\sigma^{2}_{\mathrm{I}^{\eps}}$ at $\eps=0.02$ is between
$9.4\cdot10^{-4}$ and $1.4\cdot10^{-3}$, 
in accordance with its larger exponent.

\begin{figure}[H]
\centering
\includegraphics[width=1\linewidth]{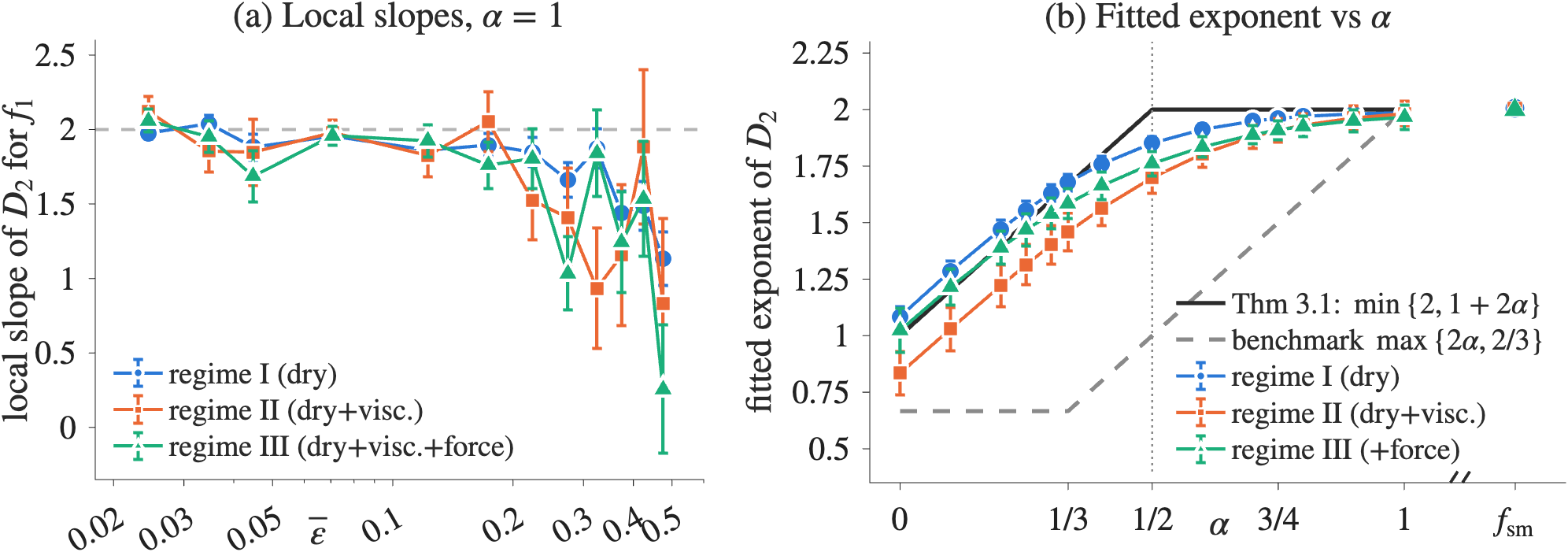}
\caption{(a) Local slopes $r_i$ of $\mathcal D_2$ for $f_1$, plotted at the midpoints
	$\bar\eps_i$ of consecutive grid values, with standard errors, in regimes (I)--(III); 
	dashed: the exponent $2$ of \Cref{thm:main}(a). 
	(b) Fitted exponent of $\mathcal D_2$ over $\eps\le 0.05$, with standard errors, 
	versus $\alpha$ for the severity family, with $f_{\rm ind}$ at $\alpha=0$ and $f_{\rm sm}$ on the right; 
	solid: $\min\{2,1+2\alpha\}$ (Theorem~\ref{thm:main}(b), (d)); 
	dashed: the second-moment benchmark $\max\{2\alpha,2/3\}$ given by \eqref{eq:C1}.} \label{fig:rates}
\end{figure}

%Table 1
\begin{table}[H]
\centering
\caption{Fitted exponents of $\mathcal D_2(\eps)$ (over $\eps\le 0.05$) 
and of the coupled bias $|\E [D^{\eps}]|$ (over the resolved values $\eps\le 0.2$) 
for the seven test observables in regimes (I)--(III), 
against the exponents predicted by \Cref{thm:main} and the benchmark $\max\{2\alpha,2/3\}$, 
and fitted exponents of the coupling errors ($^{*}$:~up to a logarithm). 
For the bias, \Cref{cor:bias} gives the exponent $1$ up to a logarithm, 
and \cite[Lemma~6.1]{GM22} the exponent $2$ for $f_{\rm sm}$. 
Digits in parentheses are standard errors in units of the last digit.}\label{tab:fitted} 
\small
\setlength{\tabcolsep}{4.5pt}
\begin{tabular}{@{}lcc@{\quad}ccc@{\quad}ccc@{}}
	\toprule
	& \multicolumn{2}{c}{prediction} & \multicolumn{3}{c}{exponent of $\mathcal D_2(\eps)$} 
	& \multicolumn{3}{c}{exponent of $|\E [D^{\eps}]|$}\\
	\cmidrule(lr){2-3}\cmidrule(lr){4-6}\cmidrule(l){7-9}
	& \Cref{thm:main} & bench. & (I) & (II) & (III) & (I) & (II) & (III)\\
	\midrule
	$f_{\rm ind}$ & $1$ & $2/3$ & $1.08(4)$ & $0.83(10)$ & $1.02(10)$ & $1.75(13)$ & $1.82(35)$ & $2.1(5)$\\
	$f_{1/4}$ & $3/2$ & $2/3$ & $1.55(4)$ & $1.31(9)$ & $1.47(7)$ & $1.82(6)$ & $1.74(16)$ & $1.89(12)$\\
	$f_{1/3}$ & $5/3$ & $2/3$ & $1.68(4)$ & $1.46(8)$ & $1.58(7)$ & $1.83(6)$ & $1.75(14)$ & $1.92(9)$\\
	$f_{1/2}$ & $2$ & $1$ & $1.851(28)$ & $1.70(7)$ & $1.76(5)$ & $1.85(5)$ & $1.78(10)$ & $1.88(7)$\\
	$f_{3/4}$ & $2$ & $3/2$ & $1.961(20)$ & $1.91(5)$ & $1.91(4)$ & $1.89(5)$ & $1.83(8)$ & $1.87(6)$\\
	$f_{1}$ & $2$ & $2$ & $1.986(19)$ & $1.98(5)$ & $1.97(5)$ & $1.90(4)$ & $1.87(9)$ & $1.86(6)$\\
	$f_{\rm sm}$ & $2$ & $2$ & $2.005(6)$ & $2.002(6)$ & $2.000(8)$ & $1.990(20)$ & $1.995(17)$ & $1.952(14)$\\
	\midrule
	$\E[|Z^{\eps}(T)|^{2}]$ & $2$ & -- & $2.0002(35)$ & $1.9970(33)$ & $2.001(5)$ &  &  & \\
	$\E[\max_{m}|Z^{\eps}_m|^{2}]$ & $2^{*}$ & -- & $1.760(7)$ & $1.746(8)$ & $1.746(7)$ &  &  & \\
	\bottomrule
\end{tabular}
\end{table}

%Table 2
\begin{table}[H]
\centering
\caption{Crossing indicator $f_{\rm ind}$: 
estimate $\widehat K^{\eps}_N$ of $\mathrm{I}^{\eps}$,
coupled estimate of the bias $\E [D^{\eps}]=\mathrm{I}^{\eps}-\mathrm{I}^{0}$ of the white-noise
approximation, 
$\sigma_{\mathrm{I}^{\eps}}$, $\widehat\lambda^{\eps}_N$ and the variance ratios of
Corollary~\ref{cor:vr}, with standard errors in parentheses. 
At equal accuracy,
$\widehat K^{\eps}_N$ costs about $2\sigma^{2}_{K^{\eps}}/\sigma^{2}_{\mathrm{I}^{\eps}}$ times the
work of plain Monte Carlo. 
In regime (II) at $\eps=1$, no path of $V^{\eps}$ reached the threshold,
so that the ratios are undefined (--). }\label{tab:vr}
\small
\begin{tabular}{@{}r@{\quad}cccccc@{}}
	\toprule
	$\eps$ & $\widehat K^{\eps}_N$ & $\E [D^{\eps}]$ & $\sigma_{\mathrm I^{\eps}}$ & $\widehat\lambda^{\eps}_N$ & $\sigma^{2}_{J^{\eps}}/\sigma^{2}_{\mathrm I^{\eps}}$ & $\sigma^{2}_{K^{\eps}}/\sigma^{2}_{\mathrm I^{\eps}}$\\
	\midrule
	\multicolumn{7}{@{}l}{regime (I), \ $\mathrm I^{0}=0.058095$}\\
	$1$ & $0.00193(9)$ & $-0.0557(5)$ & $0.0438$ & $0.0191(13)$ & $28.2(13)$ & $0.9897(11)$\\
	$0.5$ & $0.02606(29)$ & $-0.0319(4)$ & $0.159$ & $0.316(4)$ & $1.793(25)$ & $0.7850(35)$\\
	$0.2$ & $0.05187(26)$ & $-0.00609(28)$ & $0.221$ & $0.7720(34)$ & $0.395(6)$ & $0.338(4)$\\
	$0.1$ & $0.05631(20)$ & $-0.00185(21)$ & $0.231$ & $0.8931(24)$ & $0.1899(30)$ & $0.1782(25)$\\
	$0.05$ & $0.05755(15)$ & $-0.00052(15)$ & $0.232$ & $0.9461(22)$ & $0.1002(31)$ & $0.0973(29)$\\
	$0.02$ & $0.05810(6)$ & $0.00001(6)$ & $0.233$ & $0.9819(4)$ & $0.0362(6)$ & $0.0359(6)$\\
	\midrule
	\multicolumn{7}{@{}l}{regime (II), \ $\mathrm I^{0}=0.007699$}\\
	$1$ & $0$ & $-0.00760(18)$ & $0$ & $0$ & -- & --\\
	$0.5$ & $0.00120(7)$ & $-0.00663(18)$ & $0.0349$ & $0.105(6)$ & $6.05(33)$ & $0.930(7)$\\
	$0.2$ & $0.00579(11)$ & $-0.00195(13)$ & $0.0763$ & $0.601(10)$ & $0.730(24)$ & $0.519(11)$\\
	$0.1$ & $0.00732(9)$ & $-0.00038(9)$ & $0.0852$ & $0.841(5)$ & $0.282(8)$ & $0.256(7)$\\
	$0.05$ & $0.00763(6)$ & $-0.00005(6)$ & $0.0861$ & $0.934(6)$ & $0.126(7)$ & $0.122(6)$\\
	$0.02$ & $0.007689(30)$ & $-0.000006(31)$ & $0.0866$ & $0.9698(24)$ & $0.060(4)$ & $0.059(4)$\\
	\midrule
	\multicolumn{7}{@{}l}{regime (III), \ $\mathrm I^{0}=0.013396$}\\
	$1$ & $0.000037(12)$ & $-0.01347(24)$ & $0.00612$ & $0.0022(6)$ & $355(23)$ & $0.9984(4)$\\
	$0.5$ & $0.00384(12)$ & $-0.00940(22)$ & $0.0616$ & $0.191(7)$ & $3.13(11)$ & $0.875(7)$\\
	$0.2$ & $0.01089(14)$ & $-0.00253(16)$ & $0.104$ & $0.666(8)$ & $0.592(15)$ & $0.455(8)$\\
	$0.1$ & $0.01297(11)$ & $-0.00042(12)$ & $0.113$ & $0.860(8)$ & $0.257(14)$ & $0.236(11)$\\
	$0.05$ & $0.01318(8)$ & $-0.00022(8)$ & $0.114$ & $0.927(5)$ & $0.131(8)$ & $0.126(8)$\\
	$0.02$ & $0.01331(4)$ & $-0.00009(4)$ & $0.115$ & $0.9727(24)$ & $0.049(4)$ & $0.048(4)$\\
	\bottomrule
\end{tabular}
\end{table}

\section{Conclusion}\label{sec:concl}
For the inertial dry-friction inclusion driven by colored noise, 
rate estimates are derived for the control variate estimators of \cite{GM22} with non-smooth observables. 
The analysis rests on two structural properties of the scalar velocity inclusion. 
The first is a pathwise comparison principle, 
which yields \(2\)-Lipschitz continuity of the solution map for the uniform norm and survives time discretization. 
The second is an explicit density bound for the white-noise terminal velocity, 
obtained by a Girsanov comparison with pure dry friction. 
These properties give Gaussian-tail coupling errors and 
yield \(O(\varepsilon^{2\alpha})\) rates for \(\alpha\)-H\"older observables, 
\(O(\varepsilon)\) rates for observables of bounded variation satisfying a one-sided continuity condition, 
\(O(\varepsilon^{1+\alpha})\) rates for observables that are both \(\alpha\)-H\"older and of bounded variation, 
and \(O(\varepsilon^{\min\{2,1+2\alpha\}})\) rates for observables with finitely many \(\alpha\)-cusps. 
The last three rates hold up to logarithmic factors. 
These rate estimates also quantify the bias of the white-noise approximation and 
yield explicit non-asymptotic sample sizes for reliability analysis. 
Moreover, the discrete comparison principle ensures that the coupling remains valid after time discretization. 

Numerical experiments in the three regimes support these rates: 
the fitted exponents of the second moment agree with the theorem for the crossing indicator and the Lipschitz observables,
agree with a model computation of the pre-asymptotic corrections for the severity family, and
exceed those that the mean-square coupling bound alone would give. 
The experiments also indicate that the bias of the white-noise approximation is of order $\varepsilon^{2}$ 
for these non-smooth observables, 
which the bounds proved here do not capture; proving this is left for future work.

\section*{Code and data availability}
No experimental data were used in this study. 
The Python code that produces the numerical results, 
tables and figures of Section~\ref{sec:numerics} and Appendix~\ref{app:implementation}, 
together with the generated data files, 
is available at \url{https://github.com/Lihong-Guo/Dry-friction.git} for research purposes. 

\section*{Use of generative AI}
The author used DeepSeek and Claude (Anthropic) for language editing, 
mathematical assistance, and numerical scripting. 
The author reviewed all outputs and take full responsibility for the final content. 

\bibliographystyle{unsrtnat}%{plain}
\bibliography{References}

%  SUPPLEMENTARY MATERIAL 
\appendix
\begin{center}
	{\Large\bfseries Appendices}
\end{center}

\section{Auxiliary lemmas}\label{app:aux}
\subsection{No sticking under white noise}\label{app:nostick}
The following lemma is a standard consequence of the occupation-times formula
\cite[Chapter~VI, Corollary~1.6]{RY99}. 
The absence of sticking under white noise is noted in \cite{TVJ10}.

\begin{lemma}[No sticking under white noise]\label{lem:nostick}
	Assume that \rm{Assumptions \ref{ass:vel}}, \ref{ass:colo}, and \rm{\ref{ass:nondeg}} hold. 
	Then $V^{0}$ is a continuous semimartingale with $\langle V^{0}\rangle_t=\mathfrak{s}^{2}t$. 
	Almost surely $\Leb\{t\in[0,T]:V^{0}(t)=0\}=0$,
	$k^{0}(t)=\Delta\sgn(V^{0}(t))$ for a.e.\ $t\in[0,T]$, and, with the single-valued drift
	$\drift(v) \triangleq \bb(v)-\Delta\sgn(v)$,
	\begin{equation}\label{eq:white-sde}
		V^{0}(t)=v_0+\int_0^{t}\drift\big(V^{0}(u)\big)\,\dd u+\mathfrak{s}B(t),\qquad t\in[0,T].
	\end{equation}
\end{lemma}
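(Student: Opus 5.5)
We prove \Cref{lem:nostick} in four steps: identify $V^{0}$ as a semimartingale, show that its zero set has Lebesgue measure zero, identify the friction force off that set, and substitute into \eqref{eq:SP-integral}.

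\emph{Semimartingale structure.} By \eqref{eq:coupled} and \Cref{def:solution}, $V^{0}=\mathcal S(N^{0})$ satisfies
\[
V^{0}(t)=v_0+\int_0^{t}\big(\bb(V^{0}(u))-k^{0}(u)\big)\dd u+N^{0}(t),\qquad t\in[0,T],
\]
with $N^{0}=\mathfrak{s}B$ under \Cref{ass:nondeg}. The process $V^{0}$ is continuous and adapted by non-anticipativity of $\mathcal S$. The friction force $k^{0}$ is bounded by $\Delta$. It can be taken progressively measurable, because it is a.e.\ the derivative of the adapted continuous process $v_0+\int_0^{\cdot}\bb(V^{0})\,\dd u+N^{0}-V^{0}$. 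The integrand $\bb(V^{0})-k^{0}$ is therefore locally bounded, so the $\dd u$-integral is a continuous adapted process of finite variation. Hence $V^{0}$ is a continuous semimartingale whose martingale part is $\mathfrak{s}B$. Since finite-variation terms do not contribute to the bracket, $\langle V^{0}\rangle_t=\mathfrak{s}^{2}\langle B\rangle_t=\mathfrak{s}^{2}t$.

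\emph{No sticking.} Apply the occupation-times formula \cite[Chapter~VI, Corollary~1.6]{RY99} with $\Phi=\1_{\{0\}}$, a bounded Borel function. This gives, almost surely,
\[
\mathfrak{s}^{2}\,\Leb\{t\le T:V^{0}(t)=0\}=\int_0^{T}\1_{\{0\}}(V^{0}(t))\,\dd\langle V^{0}\rangle_t=\int_\R\1_{\{0\}}(a)\,L^{a}_T\,\dd a=0,
\]
where $L^{a}_T$ is the local time of $V^{0}$ at level $a$. The right-hand side vanishes because $\{0\}$ has Lebesgue measure zero. Since $\mathfrak{s}>0$, the zero set has Lebesgue measure zero. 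This is the only point where \Cref{ass:nondeg} enters; without it the formula gives no information.

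\emph{Identifying the friction force.} For a.e.\ $t$ we have $k^{0}(t)\in\partial\varphi(V^{0}(t))$, and $\partial\varphi(v)=\{\Delta\sgn(v)\}$ for $v\ne0$. Combined with the previous step, $k^{0}(t)=\Delta\sgn(V^{0}(t))$ for a.e.\ $t$. On the null set where $V^{0}=0$ the value of $k^{0}$ is irrelevant for the integral. Substituting $k^{0}=\Delta\sgn(V^{0})$ into the integral equation yields \eqref{eq:white-sde} with $\drift=\bb-\Delta\sgn$.

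No step is a real obstacle. The only points needing care are the measurability of $k^{0}$, which is needed to call $V^{0}$ a semimartingale in the filtration, and invoking the occupation formula for the indicator $\1_{\{0\}}$ rather than for a continuous $\Phi$. The latter is harmless, since the corollary holds for all bounded Borel functions.
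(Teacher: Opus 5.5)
Your proposal is correct and follows the paper's proof step for step: the integral form gives the semimartingale decomposition with martingale part $\mathfrak{s}B$, the occupation-times formula applied to $\1_{\{0\}}$ shows the zero set of $V^{0}$ is Lebesgue-null, and the friction force is then identified off that null set before substituting back. Your remark on the progressive measurability of $k^{0}$ is harmless but not needed, since the finite-variation part is already adapted as $V^{0}-v_0-\mathfrak{s}B$, which is exactly how the paper argues.
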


\begin{proof}
	By \eqref{eq:SP-integral}, 
	$V^{0}(t)=v_0+\int_0^{t}(\bb(V^{0})-k^{0})\dd u+\mathfrak{s}B(t)$, 
	where the integral is a continuous adapted process of finite variation (it equals $V^{0}(t)-v_0-\mathfrak{s}B(t)$). 
	Hence $V^{0}$ is a continuous semimartingale with $\langle V^{0}\rangle_t=\mathfrak{s}^{2}t$. 
	By the occupation-times formula,
	$\mathfrak{s}^{2}\int_0^{T}\1_{\{V^{0}(t)=0\}}\dd t=\int_\R\1_{\{0\}}(a)L^{a}_T(V^{0})\,\dd a=0$ almost
	surely, where $L^{a}(V^{0})$ is the local time of $V^{0}$ at level $a$; 
	since $\mathfrak{s}>0$, $\Leb\{t:V^{0}(t)=0\}=0$. 
	For $t$ with $V^{0}(t)\neq0$, $\partial\varphi(V^{0}(t))$ is the
	singleton $\{\Delta\sgn(V^{0}(t))\}$, 
	so $k^{0}(t)=\Delta\sgn(V^{0}(t))$ for a.e.\ $t$, and \eqref{eq:white-sde} follows.
\end{proof}

\subsection{Maximal inequality for the fast OU process}\label{app:OU}
The following two lemmas are standard: 
the expected supremum of an OU process over a time interval of length $t$ grows like $(\log t)^{1/2}$ \cite{GP00}. 
We include their short proofs because the constants $C_0$ and $c_0$ enter \Cref{cor:coupling}.
\begin{lemma}\label{lem:OU-unit}
	Let $\widetilde{\boldsymbol{\eta}}$ be the stationary solution of
	$\dd\widetilde{\boldsymbol{\eta}}
	=-\mathbf{A}\widetilde{\boldsymbol{\eta}}\,\dd u+\mathbf{K}\,\dd\widetilde {\mathbf{W}}$, 
	where $\widetilde{\mathbf{W}}$ is a
	standard $d'$-dimensional Brownian motion and 
	$\widetilde{\boldsymbol{\eta}}(0)\sim\mathcal N(0,\mathbf C)$ is independent of $\widetilde{\mathbf{W}}$. 
	Let $a_A \triangleq \max_{u\in[0,1]}\|e^{-\mathbf{A}u}\|$,
	$b_{A,K} \triangleq \|\mathbf{K}\|+\max_{u\in[0,1]}\|\mathbf{A}e^{-\mathbf{A}u}\mathbf{K}\|$ 
	and $c_{\mathbf C} \triangleq \max_i\mathbf C_{ii}$. 
	Then
	$Y_1 \triangleq \sup_{u\in[0,1]}|\widetilde{\boldsymbol{\eta}}(u)|$ satisfies, for all $y\ge0$,
	\[
	\PP(Y_1>y)\le d\,e^{-y^{2}/(8a_A^{2}d\,c_{\mathbf C})}+2d'\,e^{-y^{2}/(8b_{A,K}^{2}d')}
	\le C_0\,e^{-c_0y^{2}},
	\]
	with $C_0 \triangleq d+2d'$ and 
	$c_0 \triangleq \min\{(8a_A^{2}d\,c_{\mathbf C})^{-1},(8b_{A,K}^{2}d')^{-1}\}$ (if
	$K=0$, then $\widetilde{\boldsymbol{\eta}}\equiv0$ and there is nothing to prove).
\end{lemma}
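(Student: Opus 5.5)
The statement to prove is Lemma~\ref{lem:OU-unit}. The plan is to write $\widetilde{\boldsymbol{\eta}}$ on $[0,1]$ through the variation-of-constants formula and bound its two pieces separately:
\[
\widetilde{\boldsymbol{\eta}}(u)=e^{-\mathbf{A}u}\widetilde{\boldsymbol{\eta}}(0)+\int_0^{u}e^{-\mathbf{A}(u-s)}\mathbf{K}\,\dd\widetilde{\mathbf{W}}(s).
\]
For the stochastic integral, integrate by parts to avoid a stochastic supremum bound. This gives
\[
\int_0^{u}e^{-\mathbf{A}(u-s)}\mathbf{K}\,\dd\widetilde{\mathbf{W}}(s)=\mathbf{K}\widetilde{\mathbf{W}}(u)-\int_0^{u}\mathbf{A}e^{-\mathbf{A}(u-s)}\mathbf{K}\widetilde{\mathbf{W}}(s)\,\dd s,
\]
which is valid pathwise since $s\mapsto e^{-\mathbf{A}(u-s)}\mathbf{K}$ is $C^1$. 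Hence
\[
Y_1\le a_A|\widetilde{\boldsymbol{\eta}}(0)|+b_{A,K}\sup_{s\le1}|\widetilde{\mathbf{W}}(s)|.
\]
This explains the constants $a_A$ and $b_{A,K}$ in the statement.

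Next, $\{Y_1>y\}\subset\{a_A|\widetilde{\boldsymbol{\eta}}(0)|>y/2\}\cup\{b_{A,K}\sup|\widetilde{\mathbf{W}}|>y/2\}$, and I bound each event componentwise.

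For the initial condition, $|\widetilde{\boldsymbol{\eta}}(0)|>y/(2a_A)$ forces some component $i$ to satisfy $|\eta_i(0)|>y/(2a_A\sqrt d)$. Each $\eta_i(0)$ is $\mathcal N(0,\mathbf C_{ii})$ with $\mathbf C_{ii}\le c_{\mathbf C}$. The Gaussian tail $\PP(|G|>x)\le e^{-x^2/(2\mathbf C_{ii})}$ (the Chernoff-type bound $2\Phibar(z)\le e^{-z^2/2}$ for the two-sided tail) then gives $e^{-y^2/(8a_A^2dc_{\mathbf C})}$ per component. A union bound over the $d$ components yields the first term.

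For the Brownian part, $\sup_{s\le1}|\widetilde{\mathbf{W}}(s)|>y/(2b_{A,K})$ forces some component $j$ to satisfy $\sup_s|\widetilde W_j(s)|>y/(2b_{A,K}\sqrt{d'})$. By the reflection principle, $\PP(\sup_{s\le1}|W_j|>x)\le 2\PP(\sup W_j>x)=4\Phibar(x)\le 2e^{-x^2/2}$. Summing over the $d'$ components gives $2d'e^{-y^2/(8b_{A,K}^2d')}$.

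Finally, I take the minimum of the two exponents to get $C_0e^{-c_0y^2}$ with $C_0=d+2d'$. The only delicate points are the integration-by-parts identity and keeping the factor $2$ in the Gaussian tail bounds consistent with the stated constants. I would check $2\Phibar(z)\le e^{-z^2/2}$ for $z\ge0$, which holds since $\Phibar(z)\le\tfrac12e^{-z^2/2}$. The degenerate case $\mathbf K=0$ is trivial, as noted in the statement.
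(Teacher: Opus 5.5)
Your proposal is correct and follows the paper's proof essentially step by step. The common steps are variation of constants, integration by parts giving $Y_1\le a_A|\widetilde{\boldsymbol\eta}(0)|+b_{A,K}\sup_{s\le1}|\widetilde{\mathbf W}(s)|$, the union bound at level $y/2$, componentwise Gaussian tails and the reflection principle, all with the same constants; the only nuance is that the paper proves the integration-by-parts identity almost surely for each fixed $u$ and then for all $u\in[0,1]$ simultaneously by continuity, which is what your ``pathwise'' claim implicitly needs (alternatively, it follows directly by solving the integrated SDE as a linear ODE for $\widetilde{\boldsymbol\eta}-\mathbf K\widetilde{\mathbf W}$).
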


\begin{proof}
	By the variation-of-constants formula,
	$\widetilde{\boldsymbol{\eta}}(u) 
	= e^{-\mathbf{A}u}\widetilde{\boldsymbol{\eta}}(0)
	+\int_0^{u}e^{-\mathbf{A}(u-r)}\mathbf{K}\,\dd\widetilde{\mathbf{W}}_r$. 
	Since $r\mapsto e^{-\mathbf{A}(u-r)}\mathbf{K}$ is deterministic and $C^{1}$, 
	It\^o's product rule gives
	$\int_0^{u}e^{-\mathbf{A}(u-r)}\mathbf{K}\,\dd\widetilde {\mathbf W}_r
	=\mathbf{K}\widetilde {\mathbf W}_u-\int_0^{u}{\mathbf A}e^{-{\mathbf A}(u-r)}{\mathbf K}\widetilde {\mathbf W}_r\dd r$
	almost surely, for each $u$, and by continuity simultaneously for all $u\in[0,1]$. 
	Hence $Y_1\le a_A|\widetilde{\boldsymbol{\eta}}(0)|+b_{A,K}\sup_{r\le1}|\widetilde {\mathbf W}_r|$. 
	For a centered Gaussian
	vector $X\in\R^{d}$ with covariance $\mathbf C$,
	$\PP(|X|>z)\le\sum_{i}\PP(|X_i|>z/\sqrt d)\le d\,e^{-z^{2}/(2dc_{\mathbf C})}$, using
	$\PP(|\mathcal N(0,\varsigma^{2})|>x)\le e^{-x^{2}/(2\varsigma^{2})}$. 
	By the reflection principle, $\PP(\sup_{r\le1}|\widetilde {\mathbf W}^{j}_r|>x)\le2\PP(|\widetilde {\mathbf W}^{j}_1|>x)\le2e^{-x^{2}/2}$
	for each coordinate, so $\PP(\sup_{r\le1}|\widetilde {\mathbf W}_r|>z)\le2d'e^{-z^{2}/(2d')}$. 
	The claim follows from
	$\PP(Y_1>y)\le\PP(a_A|\widetilde{\boldsymbol{\eta}}(0)|>y/2)
	+\PP(b_{A,K}\sup_{r\le1}|\widetilde {\mathbf W}_r|>y/2)$.
\end{proof}

\begin{lemma}[Maximal inequality for the fast OU process]\label{lem:OUmax}
	Assume that \rm{\Cref{ass:colo}} holds, and let $C_0,c_0$ be as in Lemma~\ref{lem:OU-unit}. 
	For all $\eps>0$, $t>0$ and $y\ge0$,
	\begin{equation}\label{eq:OUmax}
		\PP\Big(\sup_{u\le t}|\boldsymbol{\eta}^{\eps}(u)|>y\Big)\le C_0\,\Big\lceil\frac{t}{\eps^{2}}\Big\rceil\,e^{-c_0y^{2}} .
	\end{equation}
	Consequently, for every $p>0$ there is $\mathsf L_p$ (depending also on $T$) such that
	$\E\big[\sup_{u\le T}|\boldsymbol{\eta}^{\eps}(u)|^{p}\big]\le\mathsf L_p\,(1+\log(1/\eps))^{p/2}$ for all
	$\eps\in(0,1]$.
\end{lemma}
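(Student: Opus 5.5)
The bound \eqref{eq:OUmax} will come from a union bound over unit blocks in the fast time scale, combined with stationarity and Lemma~\ref{lem:OU-unit}. Set $\widehat{\boldsymbol{\eta}}(u)=\boldsymbol{\eta}^{\eps}(\eps^{2}u)$. As noted after \Cref{ass:colo}, $\widehat{\boldsymbol{\eta}}$ is the stationary unit-scale OU process driven by $\widehat{\mathbf W}(u)=\eps^{-1}\mathbf W(\eps^{2}u)$, with initial value $\boldsymbol{\eta}_0\sim\mathcal N(0,\mathbf C)$ independent of $\widehat{\mathbf W}$ (it is $\mathcal F_0$-measurable). Then
\[
\sup_{u\le t}|\boldsymbol{\eta}^{\eps}(u)|=\sup_{s\le t/\eps^{2}}|\widehat{\boldsymbol{\eta}}(s)|\le\max_{0\le j<n}\sup_{s\in[j,j+1]}|\widehat{\boldsymbol{\eta}}(s)|,
\qquad n\triangleq\lceil t/\eps^{2}\rceil .
\]
For each $j$, the shifted process $s\mapsto\widehat{\boldsymbol{\eta}}(j+s)$ solves the same SDE, driven by the Brownian motion $\widehat{\mathbf W}(j+\cdot)-\widehat{\mathbf W}(j)$, and its initial value $\widehat{\boldsymbol{\eta}}(j)$ is independent of that increment process. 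By stationarity, $\widehat{\boldsymbol{\eta}}(j)\sim\mathcal N(0,\mathbf C)$. So the law of $\sup_{s\in[j,j+1]}|\widehat{\boldsymbol{\eta}}(s)|$ equals that of $Y_1$ in Lemma~\ref{lem:OU-unit}. A union bound over the $n$ blocks then gives $\PP(\sup_{u\le t}|\boldsymbol{\eta}^{\eps}|>y)\le nC_0e^{-c_0y^{2}}$, which is \eqref{eq:OUmax}.

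For the moment bound, write $S\triangleq\sup_{u\le T}|\boldsymbol{\eta}^{\eps}(u)|$ and $n_\eps\triangleq\lceil T/\eps^{2}\rceil\le(T+1)\eps^{-2}$ for $\eps\le1$. Use the layer-cake formula $\E[S^{p}]=\int_0^\infty py^{p-1}\PP(S>y)\,\dd y$ and split the integral at
\[
y_\eps\triangleq\big(\log(C_0n_\eps)/c_0\big)^{1/2}.
\]
On $[0,y_\eps]$ bound the probability by $1$, which contributes $y_\eps^{p}$. On $[y_\eps,\infty)$ use \eqref{eq:OUmax}, which gives
\[
\int_{y_\eps}^\infty py^{p-1}C_0n_\eps e^{-c_0y^{2}}\,\dd y=\int_{y_\eps}^\infty py^{p-1}e^{-c_0(y^{2}-y_\eps^{2})}\,\dd y .
\]
Substituting $y^{2}=y_\eps^{2}+z$ (so $\dd y\le\dd z/(2y_\eps)$ when $y_\eps>0$, and otherwise one integrates directly), this is at most $C_p(1+y_\eps^{p})$ for a constant depending only on $p$ and $c_0$. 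This uses $y^{p-1}\le 2^{|p-1|}(y_\eps^{p-1}+z^{(p-1)/2})$ together with $y\ge y_\eps$; the case $p<1$ needs separate care, handled by bounding $y^{p-1}\le y_\eps^{p-1}$ when $y_\eps\ge1$ and absorbing the remaining range into a constant.

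Finally, $y_\eps^{2}\le c_0^{-1}\big(\log(C_0(T+1))+2\log(1/\eps)\big)\le C(1+\log(1/\eps))$, so $\E[S^{p}]\le\mathsf L_p(1+\log(1/\eps))^{p/2}$, with $\mathsf L_p$ depending on $p,C_0,c_0,T$.

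The only delicate point is the identification of each block supremum with $Y_1$ in law. This needs the independence of $\widehat{\boldsymbol{\eta}}(j)$ from the future increments of $\widehat{\mathbf W}$, which holds since $\widehat{\boldsymbol{\eta}}(j)$ is measurable with respect to $\boldsymbol{\eta}_0$ and the increments of $\widehat{\mathbf W}$ up to time $j$. It also needs pathwise uniqueness, so that the shifted process is the solution considered in Lemma~\ref{lem:OU-unit}. Everything else is routine Gaussian tail integration.
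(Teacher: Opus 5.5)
Your proposal is correct and follows essentially the same route as the paper. You rescale to the unit-scale stationary OU process, cover $[0,t/\eps^{2}]$ by $\lceil t/\eps^{2}\rceil$ unit blocks whose suprema are each distributed like $Y_1$, take a union bound, and then split the layer-cake integral at a threshold of order $(\log\lceil T/\eps^{2}\rceil/c_0)^{1/2}$. The differences are cosmetic: you put $C_0$ inside the logarithm and substitute $y^{2}=y_\eps^{2}+z$, where the paper uses $y^{2}-y_0^{2}\ge(y-y_0)^{2}$, and the paper handles $p\in(0,1)$ by Jensen's inequality instead of a separate tail estimate.
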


\begin{proof}
	With $\widehat{\boldsymbol{\eta}}(u)=\boldsymbol{\eta}^{\eps}(\eps^{2}u)$ (\Cref{sec:noise}),
	$\sup_{u\le t}|\boldsymbol{\eta}^{\eps}(u)|=\sup_{u\le t/\eps^{2}}|\widehat{\boldsymbol{\eta}}(u)|$. 
	Cover $[0,t/\eps^{2}]$ by the $m \triangleq \lceil t/\eps^{2}\rceil$ intervals $[j,j+1]$, 
	$0\le j\le m-1$.
	Since $\widehat{\boldsymbol{\eta}}$ is the stationary OU process with unit time scale,
	$\sup_{u\in[j,j+1]}|\widehat{\boldsymbol{\eta}}(u)|$ has the law of $Y_1$ for every $j$, 
	and a union bound together with \Cref{lem:OU-unit} gives \eqref{eq:OUmax}.
	
	For the moments, let $Y \triangleq \sup_{u\le T}|\boldsymbol{\eta}^{\eps}(u)|$, $m \triangleq \lceil T/\eps^{2}\rceil$ and
	$y_0 \triangleq (\log m/c_0)^{1/2}$. For $y\ge y_0$ we have $mC_0e^{-c_0y^{2}}\le
	C_0e^{-c_0(y^{2}-y_0^{2})}\le C_0e^{-c_0(y-y_0)^{2}}$, hence, for $p\ge1$,
	\[
	\E[Y^{p}]=\int_0^{\infty}py^{p-1}\PP(Y>y)\,\dd y
	\le y_0^{p}+C_0p\int_0^{\infty}(z+y_0)^{p-1}e^{-c_0z^{2}}\dd z
	\le\mathsf L'_p\,(1+y_0)^{p},
	\]
	using $(z+y_0)^{p-1}\le2^{(p-2)_+}(z^{p-1}+y_0^{p-1})$. For $\eps\in(0,1]$,
	$m\le(T+1)\eps^{-2}$, so that $y_0^{2}\le c_0^{-1}(\log(T+1)+2\log(1/\eps))
	\le C(1+\log(1/\eps))$, and the bound follows. For $p\in(0,1)$ use Jensen's inequality.
\end{proof}

\subsection{Two lemmas on observables}\label{app:obs}
Both lemmas are elementary. 
\Cref{lem:crossing} is the pathwise inequality underlying Avikainen-type estimates \cite{Avi09}, 
stated for the class $\BVJ$.
\begin{lemma}[Crossing inequality]\label{lem:crossing}
	Let $f\in\BVJ$ and let $\mu_f$ be as in \eqref{eq:muf}. Then, for all $x,y\in\R$,
	\[
	|f(x)-f(y)|\le|\mu_f|\big(\seg{x}{y}\big),\qquad
	\big(f(x)-f(y)\big)^{2}\le\TV(f)\,|\mu_f|\big(\seg{x}{y}\big).
	\]
\end{lemma}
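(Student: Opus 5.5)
The second inequality follows from the first: for $f\in\mathrm{BV}$ one has $|f(x)-f(y)|\le\TV(f)$. Then
$(f(x)-f(y))^{2}=|f(x)-f(y)|\cdot|f(x)-f(y)|\le\TV(f)\,|\mu_f|(\seg{x}{y})$.
So the work is the first inequality. By symmetry we assume $x<y$; the case $x=y$ is trivial.

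The plan is to express $f$ at every point through the one-sided limits and the measure $\mu_f$ of \eqref{eq:muf}.

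First, from $\mu_f((-\infty,z])=f_+(z)-f(-\infty)$ we get $f_+(y)-f_+(x)=\mu_f((x,y])$.

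Second, $f_-(z)=\lim_{w\uparrow z}f_+(w)$, since $f_+(w)$ tends to the left limit as $w\uparrow z$ (left limits exist for BV functions). Continuity of measure then gives $f_-(z)-f(-\infty)=\mu_f((-\infty,z))$. Hence
\[
f_+(y)-f_-(x)=\mu_f([x,y]),\qquad f_-(y)-f_-(x)=\mu_f([x,y)),\qquad f_-(y)-f_+(x)=\mu_f((x,y)).
\]

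Third, condition (J) gives $f(x)\in\{f_-(x),f_+(x)\}$ and $f(y)\in\{f_-(y),f_+(y)\}$. In each of the four combinations, $f(y)-f(x)$ equals $\mu_f(I)$ for an interval $I$ with endpoints $x,y$ contained in $[x,y]$. Therefore $|f(y)-f(x)|\le|\mu_f|(I)\le|\mu_f|([x,y])=|\mu_f|(\seg{x}{y})$.

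The only delicate step is the identification $f_-(z)-f(-\infty)=\mu_f((-\infty,z))$, i.e., that the left limit of $f_+$ at $z$ coincides with $f_-(z)$. This holds because $f_+(w)$ and $f(w)$ have the same limit as $w\uparrow z$: for $w<z$ pick $w<w'<z$ with $|f(w')-f_+(w)|$ small, and note both $f(w)$ and $f(w')$ tend to $f_-(z)$.

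Condition (J) is exactly what excludes counterexamples such as $\1_{\{0\}}$. There $\mu_f=0$ while $f(0)-f(1)=1$, so the inequality would fail.
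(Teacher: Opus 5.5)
Your proof is correct and is essentially the paper's argument. The paper packages your four cases into a single kernel $\chi(u,y)$, equal to $\1_{\{u<y\}}$ if $f(u)\neq f_+(u)$ and to $\1_{\{u\le y\}}$ otherwise, so that $f(x)-f(y)=\int_\R(\chi(u,x)-\chi(u,y))\,\mu_f(\dd u)$ with an integrand vanishing off $\seg{x}{y}$. It obtains the second inequality from $|\mu_f|(\seg{x}{y})\le\TV(f)$ rather than from $|f(x)-f(y)|\le\TV(f)$. Your ``delicate step'' can also be skipped, since \eqref{eq:muf} already gives $\mu_f(\{z\})=f_+(z)-f_-(z)$ and hence $\mu_f((-\infty,z))=f_-(z)-f(-\infty)$ directly.
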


\begin{proof}
	Let $J^{-} \triangleq \{x\in\R:f(x)\neq f_{+}(x)\}$, a countable set. 
	By condition (J), $f(x)=f_{-}(x)$ for $x\in J^{-}$, while $f(x)=f_+(x)$ for $x\notin J^{-}$. 
	Define $\chi(u,y) \triangleq \1_{\{u<y\}}$ if $u\in J^{-}$ 
	and $\chi(u,y) \triangleq \1_{\{u\le y\}}$ otherwise. 
	Then
	$\chi(u,y)=\1_{\{u\le y\}}-\1_{J^{-}}(y)\1_{\{u=y\}}$, so that, by \eqref{eq:muf},
	\[
	f(-\infty)+\int_\R\chi(u,y)\,\mu_f(\dd u)
	=f_+(y)-\1_{J^{-}}(y)\big(f_{+}(y)-f_{-}(y)\big)=f(y)\qquad(y\in\R).
	\]
	Hence $f(x)-f(y)=\int_\R(\chi(u,x)-\chi(u,y))\,\mu_f(\dd u)$. 
	For each $u$,
	$y\mapsto\chi(u,y)$ is the indicator of a half-line $(u,\infty)$ or $[u,\infty)$, 
	so $\chi(u,x)\ne\chi(u,y)$ implies $u\in\seg{x}{y}$. 
	Therefore $|f(x)-f(y)|\le\int_\R\1_{\seg{x}{y}}(u)\,|\mu_f|(\dd u)=|\mu_f|(\seg{x}{y})$, 
	and the second inequality follows from $|\mu_f|(\seg{x}{y})\le|\mu_f|(\R)\le\TV(f)$.
\end{proof}

\begin{lemma}[Cusp class]\label{lem:cusp}
	Let $f\in\Cusp$. Then:
	\begin{enumerate}[label= \rm{(\roman*)}]
		\item if $\seg{x}{y}\cap S=\emptyset$, then
		$|f(x)-f(y)|\le[f]_{\alpha,S}\,\dist\big(\seg{x}{y},S\big)^{\alpha-1}|x-y|$; 
		
		\item $f\in\Hol$ with $[f]_\alpha\le c_{\alpha,S}[f]_{\alpha,S}$, 
		where $c_{\alpha,S} \triangleq 2^{2-\alpha}|S|/\alpha$.
	\end{enumerate}
\end{lemma}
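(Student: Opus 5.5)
Part (i) is the mean value theorem for absolutely continuous functions. If $\seg{x}{y}\cap S=\emptyset$, then $f$ is absolutely continuous on $\seg{x}{y}$, so
\[
|f(x)-f(y)|=\Big|\int_{\seg{x}{y}}f'(u)\,\dd u\Big|\le\int_{\seg{x}{y}}[f]_{\alpha,S}\dist(u,S)^{\alpha-1}\,\dd u .
\]
For $u\in\seg{x}{y}$ we have $\dist(u,S)\ge\dist(\seg{x}{y},S)>0$, the latter being positive because $S$ is finite and the segment is compact and disjoint from $S$. Since $\alpha-1\le0$, the integrand is at most $[f]_{\alpha,S}\dist(\seg{x}{y},S)^{\alpha-1}$, which gives (i).

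For (ii), I would first prove a one-point estimate near each $\varsigma\in S$. If $u$ lies between $\varsigma$ and a point $z$, then $\dist(u,S)$ can be smaller than $|u-\varsigma|$ only because of another point of $S$. To avoid case analysis, I would use the crude bound
\[
\dist(u,S)^{\alpha-1}\le\sum_{\varsigma\in S}|u-\varsigma|^{\alpha-1},
\]
which holds because $\dist(u,S)=|u-\varsigma^*|$ for some $\varsigma^*\in S$ and every term of the sum is nonnegative. Then, for any $x<y$, $f$ is absolutely continuous on $[x,y]$: the exceptional set $S$ is finite and $f\in\mathrm{AC}_{\mathrm{loc}}(\R)$, so no cutting at points of $S$ is needed. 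Hence
\[
|f(x)-f(y)|\le[f]_{\alpha,S}\sum_{\varsigma\in S}\int_x^y|u-\varsigma|^{\alpha-1}\,\dd u .
\]

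The key elementary step is to bound $\int_x^y|u-\varsigma|^{\alpha-1}\dd u$ uniformly in $\varsigma$. Since $u\mapsto|u-\varsigma|^{\alpha-1}$ is symmetric and decreasing in the distance to $\varsigma$, the integral over an interval of length $h=y-x$ is maximized when the interval is centered at $\varsigma$. The maximal value is
\[
2\int_0^{h/2}s^{\alpha-1}\,\dd s=\frac{2}{\alpha}\Big(\frac h2\Big)^{\alpha}=\frac{2^{1-\alpha}}{\alpha}h^\alpha .
\]
(This is a rearrangement argument; alternatively split $[x,y]$ at $\varsigma$ and use concavity, $a^\alpha+b^\alpha\le2^{1-\alpha}(a+b)^\alpha$.) Summing over $S$ gives $[f]_\alpha\le 2^{1-\alpha}|S|\alpha^{-1}[f]_{\alpha,S}$. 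This is better than the stated $c_{\alpha,S}=2^{2-\alpha}|S|/\alpha$ by a factor $2$, so the claim follows a fortiori. For $\alpha=1$ the integrand is identically $1$ and the bound reads $|f(x)-f(y)|\le|S|[f]_{1,S}|x-y|$, consistent with the formula.

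The only point needing care is justifying $|f(x)-f(y)|\le\int_x^y|f'|$ when $f'$ is controlled only off $S$. This is immediate: $S$ is Lebesgue-null, the essential supremum bound gives $|f'(u)|\le[f]_{\alpha,S}\dist(u,S)^{\alpha-1}$ for a.e.\ $u$, and local absolute continuity on $[x,y]$ gives the fundamental theorem of calculus. Integrability near points of $S$ holds because $\alpha>0$. I therefore expect no real obstacle; the rearrangement or concavity bound is the only computation.
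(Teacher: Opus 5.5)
Your proof is correct and follows the paper's argument: for (i), the fundamental theorem of calculus for $f\in\mathrm{AC}_{\mathrm{loc}}(\R)$, the essential-supremum bound on $f'$, and the monotonicity of $\dist(u,S)^{\alpha-1}$; for (ii), the bound $\dist(u,S)^{\alpha-1}\le\sum_{\varsigma\in S}|u-\varsigma|^{\alpha-1}$ followed by a one-point integral estimate. The only difference is in that estimate: the paper uses a cruder split according to $|u-\varsigma|\le h/2$ or not, which adds a term $2^{1-\alpha}h^\alpha$ and yields $2^{2-\alpha}/\alpha$, whereas your rearrangement argument gives the sharper $2^{1-\alpha}/\alpha$, which implies the stated constant (your concavity alternative covers only $\varsigma\in[x,y]$; the case $\varsigma\notin[x,y]$ also needs subadditivity of $t\mapsto t^\alpha$).
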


\begin{proof}
	Since $f$ is absolutely continuous on compact intervals,
	$|f(x)-f(y)|\le\int_{\seg{x}{y}}|f'(u)|\dd u\le[f]_{\alpha,S}\int_{\seg{x}{y}}\dist(u,S)^{\alpha-1}\dd u$.
	(i) follows from $\dist(u,S)^{\alpha-1}\le\dist(\seg{x}{y},S)^{\alpha-1}$ for $u\in\seg{x}{y}$. 
	For (ii), since $\alpha-1\le0$,
	$\dist(u,S)^{\alpha-1}=\max_{\varsigma\in S}|u-\varsigma|^{\alpha-1}
	\le\sum_{\varsigma\in S}|u-\varsigma|^{\alpha-1}$. 
	For an interval $J$ of length $h$ and
	$\varsigma\in\R$, splitting $J$ according to $|u-\varsigma|\le h/2$ or not,
	\[
	\int_J|u-\varsigma|^{\alpha-1}\dd u\le2\int_0^{h/2}z^{\alpha-1}\dd z+(h/2)^{\alpha-1}h
	=\frac{2^{1-\alpha}}{\alpha}h^{\alpha}+2^{1-\alpha}h^{\alpha}\le\frac{2^{2-\alpha}}{\alpha}h^{\alpha}.
	\]
	Applying this with $J=\seg{x}{y}$, $h=|x-y|$, and summing over $\varsigma\in S$ gives (ii).
\end{proof}

\section{The law of the white-noise velocity}\label{app:whitenoise}
To the best of our knowledge, \Cref{prop:comparison,cor:coupling} are new. 
\Cref{prop:density} gives an explicit bound on the density of $V^{0}(T)$; 
its relation to known results is discussed in \Cref{rem:density}.

\begin{proposition}[Pathwise comparison principle]\label{prop:comparison}
	Let $\gamma\ge0$ and $\ell\in\R$. For all $w,w'\in C^{0}_{0}([0,T])$ and $t\in[0,T]$,
	\begin{equation}\label{eq:comparison}
		\sup_{u\le t}\big|\mathcal S(w)(u)-\mathcal S(w')(u)\big|\le 2\sup_{u\le t}|w(u)-w'(u)| .
	\end{equation}
	In particular, $\mathcal S$ is $2$-Lipschitz on $(C^{0}_{0}([0,T]),\|\cdot\|_{\infty})$.
\end{proposition}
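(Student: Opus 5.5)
We mimic in continuous time the induction already used for \Cref{prop:discrete}. Let $v=\mathcal S(w)$, $v'=\mathcal S(w')$ with friction forces $k,k'$. Set $y\triangleq v-w$, $y'\triangleq v'-w'$, and put $\Lambda\triangleq y-y'$ and $H(t)\triangleq\sup_{u\le t}|w(u)-w'(u)|$. By \eqref{eq:SP-integral},
\[
\Lambda(t)=-\int_0^{t}\big(g(u)-g'(u)\big)\,\dd u,\qquad g\triangleq k-\bb(v)\in\partial U(v),\quad g'\triangleq k'-\bb(v')\in\partial U(v')
\]
for a.e.\ $u$, using $\bb-\partial\varphi=-\partial U$. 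Thus $\Lambda$ is absolutely continuous, $\Lambda(0)=0$, and $v-v'=\Lambda+(w-w')$. It suffices to show that $|\Lambda(t)|\le H(t)$ for all $t$; then $|v-v'|\le|\Lambda|+|w-w'|\le2H$ on $[0,t]$, which gives \eqref{eq:comparison}.

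The key observation is monotonicity of $\partial U$: $U$ is convex for $\gamma\ge0$, so $\partial U$ is a monotone graph. Hence at a.e.\ time with $v(u)>v'(u)$ we have $g(u)\ge g'(u)$, so $\dot\Lambda(u)\le0$; symmetrically $\dot\Lambda\ge0$ a.e.\ where $v<v'$. Now suppose $\Lambda(t_1)>H(t_1)$ for some $t_1$. Let $t_0\triangleq\sup\{s\le t_1:\Lambda(s)\le H(s)\}$; this set contains $0$. Since $\Lambda$ is continuous and $H$ is continuous and nondecreasing, $\Lambda(t_0)\le H(t_0)$, and on $(t_0,t_1]$ we have $\Lambda>H\ge|w-w'|$. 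On that interval, therefore, $v-v'=\Lambda+(w-w')>0$, so $\Lambda$ is nonincreasing there. This gives $\Lambda(t_1)\le\Lambda(t_0)\le H(t_0)\le H(t_1)$, a contradiction. The lower bound $\Lambda\ge-H$ follows by exchanging the roles of $w$ and $w'$.

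The main obstacle is the care needed at the first-exit argument. The sign information $g\ge g'$ holds only almost everywhere and only where $v\ne v'$ strictly, so we must use the strict inequality $\Lambda>H$ on an open interval to guarantee $v>v'$ throughout, rather than arguing at a single crossing time. The argument also relies on the integrated form in \Cref{def:solution} making $\Lambda$ absolutely continuous even though $w$ is merely continuous, and on $k\in\partial\varphi(v)$ a.e.\ giving $g\in\partial U(v)$ a.e.

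The final claim, that $\mathcal S$ is $2$-Lipschitz on $(C^{0}_{0}([0,T]),\|\cdot\|_\infty)$, is the case $t=T$.
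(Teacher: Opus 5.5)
Your proposal is correct and follows essentially the same argument as the paper. Both use the absolutely continuous $\Lambda=(v-w)-(v'-w')$, the monotonicity of $\partial U$ giving $\Lambda'\le0$ a.e.\ where $v>v'$, and a last-time-below-the-bound argument on an interval where $\Lambda$ exceeds the bound, yielding $|\Lambda|\le H$ and hence $|v-v'|\le 2H$. The only cosmetic difference is that you compare $\Lambda$ with the running supremum $H(t)=\sup_{u\le t}|w(u)-w'(u)|$, which is continuous, so your set is closed. The paper instead compares with the constant $\sup_{u\le t}|w(u)-w'(u)|$ for a fixed $t$, and the two versions are equivalent.
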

\begin{proof}
	Let $(v,k)$ and $(v',k')$ be the solutions of \eqref{eq:SP} for $w$ and $w'$, and put
	$g \triangleq k-\bb(v)$, $g' \triangleq k'-\bb(v')$, which are bounded measurable functions on $[0,T]$, and
	$h \triangleq w-w'$. By \eqref{eq:SP-integral}, the function
	$\Lambda \triangleq (v-w)-(v'-w')=-\int_0^{\cdot}(g-g')\dd u$ is absolutely continuous,
	$\Lambda(0)=0$, $\Lambda'=-(g-g')$ a.e., and $v-v'=\Lambda+h$. For a.e.\ $u$,
	\[
	\big(g(u)-g'(u)\big)\big(v(u)-v'(u)\big)=\big(k(u)-k'(u)\big)\big(v(u)-v'(u)\big)
	+\gamma\big(v(u)-v'(u)\big)^{2}\ge0,
	\]
	by the monotonicity of $\partial\varphi$; in particular $g(u)\ge g'(u)$ whenever
	$v(u)>v'(u)$. Fix $t\in[0,T]$ and let $H \triangleq \sup_{u\le t}|h(u)|$. Suppose that
	$\Lambda(t_1)>H$ for some $t_1\le t$, and let $\tau \triangleq \sup\{u\in[0,t_1]:\Lambda(u)\le H\}$;
	the set is closed and contains $0$, so $\Lambda(\tau)\le H$, $\tau<t_1$ and $\Lambda>H$ on
	$(\tau,t_1]$. On $(\tau,t_1]$ we have $v-v'=\Lambda+h>H-|h|\ge0$, hence $g\ge g'$ and
	$\Lambda'\le0$ a.e., so $\Lambda(t_1)\le\Lambda(\tau)\le H$, a contradiction. Thus
	$\Lambda\le H$ on $[0,t]$, and by symmetry $\Lambda\ge-H$. Consequently
	$|v(u)-v'(u)|\le|\Lambda(u)|+|h(u)|\le2H$ for $u\le t$.
\end{proof}

\begin{proposition}[Coupling error]\label{cor:coupling}
	Assume that \rm{Assumptions \ref{ass:vel}} and \rm{\ref{ass:colo}} (but not necessarily \rm{\Cref{ass:nondeg}}) hold , 
	and let $C_0,c_0>0$ be the constants of \rm{\Cref{lem:OUmax}}, 
	which depend only on $\mathbf{A},\mathbf{K},d,d'$. 
	Then:
	\begin{enumerate}[label= \rm{(\roman*)},itemsep=1pt,topsep=2pt]
		\item almost surely,
		$\sup_{t\le T}|Z^{\eps}(t)|\le2\sup_{t\le T}|h^{\eps}(t)|\le4\kappa_0\,\eps\sup_{t\le T}|\boldsymbol{\eta}^{\eps}(t)|$;
		\item for all $x\ge0$,
		$\PP\big(\sup_{t\le T}|Z^{\eps}(t)|>x\big)\le\lceil T/\eps^{2}\rceil\,C_0\exp\big(-c_0x^{2}/(16\kappa_0^{2}\eps^{2})\big)$;
		\item for every $p>0$ there is $\mathsf C_p$ such that
		$\E\big[\sup_{t\le T}|Z^{\eps}(t)|^{p}\big]\le\mathsf C_p\,\vth^{\,p}$ for all $\eps\in(0,1]$;
		\item for every $q>0$, with $\mathsf K_q \triangleq 4\kappa_0\,((q+2)/c_0)^{1/2}$,
		\begin{equation*}\label{eq:tail-q}
			\PP\Big(\sup_{t\le T}|Z^{\eps}(t)|>\mathsf K_q\vth\Big)\le(T+1)\,C_0\,\eps^{q},\qquad\eps\in(0,1].
		\end{equation*}
	\end{enumerate}
\end{proposition}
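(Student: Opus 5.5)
Part (i) follows from \Cref{prop:comparison} applied pathwise. By \eqref{eq:coupled}, $V^{\eps}=\mathcal S(N^{\eps})$ and $V^{0}=\mathcal S(N^{0})$, with $N^{\eps}-N^{0}=h^{\eps}$. Both integrated forcings lie in $C^{0}_{0}([0,T])$, because $\boldsymbol{\eta}^{\eps}$ and $\mathbf W$ are continuous. Hence \eqref{eq:comparison} gives $\sup_{t\le T}|Z^{\eps}(t)|\le2\sup_{t\le T}|h^{\eps}(t)|$, and \eqref{eq:h-bound} gives the second inequality. Only Assumptions \ref{ass:vel} and \ref{ass:colo} enter here: the comparison principle is deterministic, and the identity \eqref{eq:integrated} needs only invertibility of $\mathbf A$. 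So \Cref{ass:nondeg} is not used.

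For (ii), by (i) the event $\{\sup_{t\le T}|Z^{\eps}(t)|>x\}$ is contained in $\{\sup_{t\le T}|\boldsymbol{\eta}^{\eps}(t)|>x/(4\kappa_0\eps)\}$. Applying \eqref{eq:OUmax} of \Cref{lem:OUmax} with $t=T$ and $y=x/(4\kappa_0\eps)$ yields $\lceil T/\eps^{2}\rceil C_0\exp(-c_0x^{2}/(16\kappa_0^{2}\eps^{2}))$, which is the claim.

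For (iii), (i) gives $\E[\sup_{t\le T}|Z^{\eps}(t)|^{p}]\le(4\kappa_0\eps)^{p}\E[\sup_{u\le T}|\boldsymbol{\eta}^{\eps}(u)|^{p}]$. The moment part of \Cref{lem:OUmax} bounds the right-hand side by $(4\kappa_0)^{p}\mathsf L_p\,\eps^{p}(1+\log(1/\eps))^{p/2}=\mathsf C_p\vth^{p}$, with $\mathsf C_p\triangleq(4\kappa_0)^{p}\mathsf L_p$.

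Part (iv) is the only step needing care, since the factor $\lceil T/\eps^{2}\rceil$ in (ii) must be absorbed. Put $x=\mathsf K_q\vth$ in (ii). Then
\[
\frac{c_0x^{2}}{16\kappa_0^{2}\eps^{2}}=(q+2)\big(1+\log(1/\eps)\big),
\]
so the exponential equals $e^{-(q+2)}\eps^{q+2}\le\eps^{q+2}$. For $\eps\in(0,1]$ we have $\lceil T/\eps^{2}\rceil\le T/\eps^{2}+1\le(T+1)\eps^{-2}$. Multiplying the two bounds gives $(T+1)C_0\eps^{q}$. The choice $q+2$ in $\mathsf K_q$ is exactly what cancels the $\eps^{-2}$ count of unit-time blocks, so beyond this bookkeeping no real obstacle is expected.
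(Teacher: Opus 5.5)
Your proposal is correct and follows the paper's proof step for step. Part (i) comes from \Cref{prop:comparison} and \eqref{eq:h-bound}, parts (ii)--(iii) from (i) together with \Cref{lem:OUmax}, and part (iv) from substituting $x=\mathsf K_q\vth$ into (ii) and absorbing the factor $\lceil T/\eps^{2}\rceil\le(T+1)\eps^{-2}$ into $\eps^{q+2}$.
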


\begin{proof}
	(i) By \eqref{eq:coupled}, $V^{\eps}=\mathcal S(N^{\eps})$ and $V^{0}=\mathcal S(N^{0})$ with
	$N^{\eps},N^{0}\in C^{0}_{0}([0,T])$, so Proposition~\ref{prop:comparison} gives the first
	inequality, and \eqref{eq:h-bound} the second. (ii) follows from (i) and \eqref{eq:OUmax},
	and (iii) from (i) and the moment bound of Lemma~\ref{lem:OUmax}. (iv) For
	$x=\mathsf K_q\vth$ we get $c_0x^{2}/(16\kappa_0^{2}\eps^{2})=(q+2)(1+\log(1/\eps))$, and
	$\lceil T/\eps^{2}\rceil\le(T+1)\eps^{-2}$ for $\eps\le1$, so (ii) gives
	$\PP(\cdot)\le(T+1)\eps^{-2}C_0e^{-(q+2)}\eps^{q+2}\le(T+1)C_0\eps^{q}$.

	\Cref{cor:coupling} strengthens, with a rate, the convergence in probability of
	\cite[Proposition~5.1(2)]{GM22}; the factor $(1+\log(1/\eps))^{1/2}$, due to the supremum of
	the OU process over a time interval of length $T/\eps^{2}$, is of the correct order for
	scalar noise \cite{GP00}.
\end{proof}

\begin{proposition}[Density of the white-noise velocity]\label{prop:density}
	Assume that \rm{Assumptions \ref{ass:vel}, \ref{ass:colo}}, and \rm{\ref{ass:nondeg}} hold, 
	and let $U,U_*$ be as in \eqref{eq:potential}. Then the law of
	$V^{0}(T)$ is equivalent to Lebesgue measure, and its density $p_T$ satisfies $p_T\le\bar p_T$
	a.e., where $\bar p_T$ is the explicit function 
	\begin{equation}\label{eq:pbar}
		\bar p_T(v) \triangleq e^{\frac{\gamma T}{2}+\frac{U(v_0)-U(v)}{\mathfrak{s}^{2}}}
		\Big[\frac{e^{-\frac{(v-v_0)^{2}}{2\mathfrak{s}^{2}T}}}{\sqrt{2\pi \mathfrak{s}^{2}T}}
		+\frac{\Delta}{\mathfrak{s}^{2}}\,e^{\frac{\Delta^{2}T}{2\mathfrak{s}^{2}}-\frac{\Delta(|v|+|v_0|)}{\mathfrak{s}^{2}}}\,
		\Phibar\Big(\frac{|v|+|v_0|-\Delta T}{\mathfrak{s}\sqrt T}\Big)\Big], \quad
		v\in \R.
	\end{equation}
	In particular,
	\begin{equation}\label{eq:M}
		M \triangleq \operatorname*{ess\,sup}_{v\in\R}p_T(v)\le
		e^{\frac{\gamma T}{2}+\frac{U(v_0)-U_*}{\mathfrak{s}^{2}}}
		\Big[\frac{1}{\sqrt{2\pi \mathfrak{s}^{2}T}}+\frac{\Delta}{\mathfrak{s}^{2}}\,e^{\frac{\Delta^{2}T}{2\mathfrak{s}^{2}}}\Big]<\infty .
	\end{equation}
\end{proposition}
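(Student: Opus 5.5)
We compare $V^{0}$, via Girsanov's theorem, with the pure dry-friction diffusion $Y(t)=v_0-\Delta\int_0^t\sgn(Y)\,\dd u+\mathfrak{s}B(t)$, whose transition density is known in closed form \cite{KS84}. By Lemma~\ref{lem:nostick}, $V^{0}$ solves the SDE \eqref{eq:white-sde} with drift $\drift=\bb-\Delta\sgn=-U'$ a.e. The difference of drifts relative to $Y$ is $\bb(v)=-\gamma v+\ell$, which has linear growth, so Novikov on short subintervals (or Beneš' criterion) shows that the exponential martingale is a true martingale. Hence the law of $V^{0}$ on $C([0,T])$ is equivalent to that of $Y$. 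In particular, the law of $V^{0}(T)$ is equivalent to Lebesgue measure, because the law of $Y(T)$ has a positive density.

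To get the explicit bound, we write the Radon--Nikodym derivative in terms of $Y$ and remove the stochastic integral with Itô--Tanaka applied to $U$. We have
\[
\frac{\dd\PP_{V^{0}}}{\dd\PP_{Y}}=\exp\Big(\frac{1}{\mathfrak{s}^{2}}\int_0^T\bb(Y)\,(\dd Y+\Delta\sgn(Y)\dd u)-\frac{1}{2\mathfrak{s}^{2}}\int_0^T\bb(Y)^{2}\dd u\Big).
\]
The cleaner route is to compare both processes with Brownian motion $v_0+\mathfrak{s}B$. Since $U$ is convex with $U''=\gamma+2\Delta\delta_0$, Tanaka's formula gives $U(Y_T)-U(v_0)=\int U'(Y)\dd Y+\tfrac{\mathfrak{s}^{2}}{2}\gamma T+\mathfrak{s}^{2}\Delta L^{0}_T$, where $L^{0}$ is the symmetric local time normalized to $\mathfrak{s}^{2}$. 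Then the density of $V^{0}$ with respect to Wiener measure $\PP_W$ started at $v_0$ is
\[
\exp\Big(\frac{U(v_0)-U(W_T)}{\mathfrak{s}^{2}}+\frac{\gamma T}{2}+\Delta L^{0}_T(W)-\frac{1}{2\mathfrak{s}^{2}}\int_0^T U'(W)^{2}\dd u\Big).
\]
The same computation with $\gamma=\ell=0$ gives the density of $Y$ against $\PP_W$:
\[
\exp\Big(\frac{\Delta(|v_0|-|W_T|)}{\mathfrak{s}^{2}}+\Delta L^{0}_T(W)-\frac{\Delta^{2}T}{2\mathfrak{s}^{2}}\Big),
\]
using $U'(W)^{2}=\Delta^{2}$ a.e.

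Next we drop a nonpositive term. For the general $U$ we bound $-\tfrac{1}{2\mathfrak{s}^{2}}\int U'(W)^{2}\le 0$. Hence, for any Borel set $A$,
\[
\PP(V^{0}(T)\in A)\le e^{\frac{\gamma T}{2}}\,\E_W\Big[\1_A(W_T)\,e^{\frac{U(v_0)-U(W_T)}{\mathfrak{s}^{2}}}\,e^{\Delta L^{0}_T}\Big].
\]
It remains to bound the joint law of $(W_T,L^{0}_T)$ for Brownian motion, which is classical via the reflection principle. One way is to rewrite $e^{\Delta L^{0}_T}=e^{\Delta^{2}T/(2\mathfrak{s}^{2})}\,e^{\Delta(|W_T|-|v_0|)/\mathfrak{s}^{2}}\cdot\frac{\dd\PP_Y}{\dd\PP_W}$, using the pure dry-friction density above. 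This yields
\[
p_T(v)\le e^{\frac{\gamma T}{2}+\frac{U(v_0)-U(v)}{\mathfrak{s}^{2}}}\,e^{\frac{\Delta^{2}T}{2\mathfrak{s}^{2}}+\frac{\Delta(|v|-|v_0|)}{\mathfrak{s}^{2}}}\,q_T(v_0,v),
\]
where $q_T$ is the Karatzas--Shreve density of $Y(T)$. Inserting the closed form of $q_T$ from \cite{KS84} should produce exactly the bracket in \eqref{eq:pbar}, with the Gaussian term and the $\Phibar$ term. Matching these exponentials exactly is the step I expect to be the main obstacle. If the direct insertion does not match, I would compute $\E_W[e^{\Delta L^{0}_T};W_T\in\dd v]$ from the joint density of $(|W_T|,L^{0}_T)$. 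That gives the Gaussian density plus $\tfrac{\Delta}{\mathfrak{s}^{2}}e^{\Delta^{2}T/(2\mathfrak{s}^{2})-\Delta(|v|+|v_0|)/\mathfrak{s}^{2}}\Phibar(\cdot)$ after an integration in the local-time variable, plausibly with the Gaussian term bounded by the no-crossing-plus-crossing decomposition.

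The statement \eqref{eq:M} then follows immediately. We use $U(v)\ge U_*$, drop the factor $e^{-(v-v_0)^{2}/(2\mathfrak{s}^{2}T)}\le1$, and bound $e^{-\Delta(|v|+|v_0|)/\mathfrak{s}^{2}}\Phibar\le1$.
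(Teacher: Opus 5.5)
Your proof is correct and yields exactly $\bar p_T$, but it takes a detour through Wiener measure that the paper avoids. The step you flag as the main obstacle is a one-line check. Multiplying \eqref{eq:pI} by $e^{\Delta^{2}T/(2\mathfrak s^{2})+\Delta(|v|-|v_0|)/\mathfrak s^{2}}$ turns its first term into the Gaussian density and its second into $\frac{\Delta}{\mathfrak s^{2}}e^{\Delta^{2}T/(2\mathfrak s^{2})-\Delta(|v|+|v_0|)/\mathfrak s^{2}}\Phibar(\cdot)$. So your bound is precisely \eqref{eq:pbar}, and the fallback via the joint law of $(W_T,L^0_T)$ is unnecessary.

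The paper simply continues from your first display for $\dd\PP_{V^0}/\dd\PP_Y$. Since $\bb=-U_0'$ with $U_0(v)=\frac{\gamma}{2}v^{2}-\ell v$ smooth, the ordinary It\^o formula gives $\int_0^T\bb(Y)\,\dd Y=U_0(v_0)-U_0(Y_T)+\gamma\mathfrak s^{2}T/2$, with no local time. The $\dd u$-term is bounded pointwise by $\Delta\,\bb\,\sgn-\frac12\bb^{2}\le\frac{\Delta^{2}}{2}$. Hence $\bar p_T=e^{\gamma T/2+\Delta^{2}T/(2\mathfrak s^{2})+(U_0(v_0)-U_0(v))/\mathfrak s^{2}}\,p^{\rm (I)}_T$ follows directly; this is the point of Remark~\ref{rem:density}(iv).

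Your Tanaka step on the full $U$ creates $e^{\Delta L^0_T}$ only to cancel it against the pure-friction density. The inequality you drop, $-\frac12U'(W)^{2}\le0$, is the same as the paper's, because $U'^{2}=\bb^{2}-2\Delta\,\bb\,\sgn+\Delta^{2}$ for $v\neq0$. So the two bounds coincide. The paper's route needs one Girsanov change and the $C^{2}$ It\^o formula. Yours needs two Girsanov densities against Wiener measure plus Tanaka's formula, and in exchange shows explicitly that the $\Phibar$ term comes from the local time at $0$.

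One small omission: applying Bene\v{s} or Novikov to the integrand $\bb(V^0(u))$ requires growth in the driving Brownian motion, $|V^0(u)|\le C(1+\sup_{r\le u}|B(r)|)$. The paper obtains this from the comparison principle; Gronwall would also do.
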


\begin{proof}
	Let $\drift$ be as in Lemma~\ref{lem:nostick}, so that \eqref{eq:white-sde} holds, and let
	$U_0(v)\triangleq\frac{\gamma}{2}v^{2}-\ell v$, so that $U=U_0+\Delta|\cdot|$ and $\bb=-U_0'$.
	We show that, for every Borel set $E\subset\R$,
	\begin{equation}\label{eq:density-bound}
		\PP\big(V^{0}(T)\in E\big)\le\int_E\bar p_T(v)\,\dd v,
	\end{equation}
	and deduce the proposition from \eqref{eq:density-bound} in Step~4.
	
	\emph{Step 1 (Girsanov transformation to pure dry friction).} Let
	$\Theta_u\triangleq\bb(V^{0}(u))/\mathfrak{s}$. Since $|\bb(v)|\le\gamma|v|+|\ell|$ and, by
	\eqref{eq:comparison} applied on $[0,u]$,
	$|V^{0}(u)|\le\|\mathcal S(0)\|_{\infty}+2\mathfrak{s}\sup_{r\le u}|B(r)|$, there is $C_\Theta$
	such that $|\Theta_u|\le C_\Theta(1+\sup_{r\le u}|B(r)|)$. Since $V^{0}$ is a non-anticipative
	functional of $B$, Bene\v{s}' condition \cite[Chapter~3, Corollary~5.16]{KS91} shows that
	\[
	\mathcal Z_t\triangleq\exp\Big(-\int_0^{t}\Theta_u\,\dd B(u)-\frac12\int_0^{t}\Theta_u^{2}\,\dd u\Big),
	\qquad t\in[0,T],
	\]
	is a martingale. Define $\QQ\triangleq\mathcal Z_T\cdot\PP$ on $\mathcal F_T$. By Girsanov's
	theorem \cite[Chapter~3, Theorem~5.1]{KS91}, $\widetilde B(t)\triangleq B(t)+\int_0^{t}\Theta_u\,\dd u$
	is a $\QQ$-Brownian motion on $[0,T]$, and \eqref{eq:white-sde} becomes
	\[
	V^{0}(t)=v_0-\Delta\int_0^{t}\sgn\big(V^{0}(u)\big)\,\dd u+\mathfrak{s}\widetilde B(t),\qquad t\in[0,T].
	\]
	Thus, under $\QQ$, $V^{0}$ is a weak solution of the white-noise equation of pure dry friction
	(regime~(I)). After division by $\mathfrak{s}$, this equation has unit diffusion coefficient and
	a bounded drift, so that its weak solutions are unique in law
	\cite[Chapter~5, Proposition~3.10]{KS91}; 
	the value of $\sgn(0)$ is irrelevant by Lemma~\ref{lem:nostick}. 
	Hence, under $\QQ$, $V^{0}(T)$ has the closed-form density of pure dry friction \cite[(2.10)]{TVJ10},
	\begin{equation}\label{eq:pI}
		p^{\rm (I)}_T(v)\triangleq e^{-\frac{\Delta^{2}T}{2\mathfrak{s}^{2}}-\frac{\Delta(|v|-|v_0|)}{\mathfrak{s}^{2}}}\,
		\frac{e^{-\frac{(v-v_0)^{2}}{2\mathfrak{s}^{2}T}}}{\sqrt{2\pi\mathfrak{s}^{2}T}}
		+\frac{\Delta}{\mathfrak{s}^{2}}\,e^{-\frac{2\Delta|v|}{\mathfrak{s}^{2}}}\,
		\Phibar\Big(\frac{|v|+|v_0|-\Delta T}{\mathfrak{s}\sqrt T}\Big),\qquad v\in\R.
	\end{equation}
	Since $\mathcal Z_T>0$, $\PP\ll\QQ$, and since
	$\mathfrak{s}\,\dd\widetilde B(u)=\dd V^{0}(u)+\Delta\sgn(V^{0}(u))\,\dd u$,
	\begin{equation*}\label{eq:dPdQ}
		\begin{aligned}
			\frac{\dd\PP}{\dd\QQ}=\mathcal Z_T^{-1}
			&=\exp\Big(\int_0^{T}\Theta_u\,\dd\widetilde B(u)-\frac12\int_0^{T}\Theta_u^{2}\,\dd u\Big)\\
			&=\exp\Big(\frac{1}{\mathfrak{s}^{2}}\int_0^{T}\bb(V^{0})\,\dd V^{0}
			+\frac{1}{\mathfrak{s}^{2}}\int_0^{T}\Big(\Delta\,\bb(V^{0})\sgn(V^{0})-\frac12\,\bb(V^{0})^{2}\Big)\dd u\Big).
		\end{aligned}
	\end{equation*}
	
	\emph{Step 2 (It\^o's formula).} Since $U_0''=\gamma$ and $\langle V^{0}\rangle_t=\mathfrak{s}^{2}t$,
	It\^o's formula gives
	$\int_0^{T}\bb(V^{0})\,\dd V^{0}=U_0(v_0)-U_0(V^{0}(T))+\gamma\mathfrak{s}^{2}T/2$. Moreover,
	$\Delta\,\bb\sgn-\bb^{2}/2\le\Delta|\bb|-\bb^{2}/2\le\Delta^{2}/2$. Hence
	\begin{equation}\label{eq:dPdQ-bound}
		\frac{\dd\PP}{\dd\QQ}\le\exp\Big(\frac{\gamma T}{2}+\frac{\Delta^{2}T}{2\mathfrak{s}^{2}}
		+\frac{U_0(v_0)-U_0(V^{0}(T))}{\mathfrak{s}^{2}}\Big).
	\end{equation}
	
	\emph{Step 3 (law under $\QQ$).} By \eqref{eq:dPdQ-bound} and Step~1,
	\[
	\PP\big(V^{0}(T)\in E\big)=\E_{\QQ}\Big[\frac{\dd\PP}{\dd\QQ}\,\1_E(V^{0}(T))\Big]
	\le\int_E e^{\frac{\gamma T}{2}+\frac{\Delta^{2}T}{2\mathfrak{s}^{2}}+\frac{U_0(v_0)-U_0(v)}{\mathfrak{s}^{2}}}\,
	p^{\rm (I)}_T(v)\,\dd v=\int_E\bar p_T(v)\,\dd v,
	\]
	where the last equality follows from \eqref{eq:pI}, \eqref{eq:pbar} and $U=U_0+\Delta|\cdot|$.
	This proves \eqref{eq:density-bound}.
	
	\emph{Step 4 (consequences).} By \eqref{eq:density-bound}, $\PP(V^{0}(T)\in E)=0$ whenever
	$\Leb(E)=0$, so $V^{0}(T)$ has a density $p_T$, and $p_T\le\bar p_T$ a.e.\ (apply
	\eqref{eq:density-bound} to $E=\{p_T>\bar p_T\}$). The bound \eqref{eq:M} follows from
	$U(v)\ge U_*$, $e^{-\Delta(|v|+|v_0|)/\mathfrak{s}^{2}}\le1$, $\Phibar\le1$ and the bound $1$ for the
	Gaussian factor. Finally, if $\Leb(E)>0$, then $\QQ(V^{0}(T)\in E)=\int_E p^{\rm (I)}_T(v)\,\dd v>0$
	and $\dd\PP/\dd\QQ>0$, so that $\PP(V^{0}(T)\in E)>0$. Hence the law of $V^{0}(T)$ is equivalent
	to Lebesgue measure.
	
\end{proof}
\begin{remark}\label{rem:density}
	(i) By \eqref{eq:pI} and \eqref{eq:pbar},
	\[
	\bar p_T(v)=e^{\frac{\gamma T}{2}+\frac{\Delta^{2}T}{2\mathfrak s^{2}}+\frac{U_0(v_0)-U_0(v)}{\mathfrak s^{2}}}\,
	p^{\rm (I)}_T(v),\qquad v\in\R.
	\]
	In regime~(I), $\bb=0$, so that $\QQ=\PP$ and
	$p_T=p^{\rm (I)}_T=e^{-\Delta^{2}T/(2\mathfrak s^{2})}\,\bar p_T$.
	
	(ii) In regimes~(II)--(III), the propagator is known through spectral series \cite[(3.12), (4.7)]{TVJ10}, 
	whose completeness is not proved there, and through its Laplace transform in time, 
	given in closed form in \cite[(7)--(9)]{TPJ12}. 
	Neither representation yields an explicit bound on $p_T$.
	
	(iii) The boundedness and the strict positivity of $p_T$ also follow from the two-sided
	Gaussian estimates of \cite{MPZ21} for stochastic differential equations with measurable drift
	of linear growth, applied to \eqref{eq:white-sde}; the constants there are not explicit.
	
	(iv) The closed form \eqref{eq:pI} was derived in \cite{KS84} by a Girsanov transformation to
	Brownian motion and the joint law of Brownian motion, its local time and its occupation time;
	the same Girsanov--local-time representation underlies the exact simulation algorithm of
	\cite[Section~3]{EM14}. 
	Comparing $V^{0}$ with pure dry friction instead of Brownian motion, 
	as in the proof above, keeps the local time inside \eqref{eq:pI}, 
	so that only It\^o's formula for the smooth part $U_0$ of $U$ is needed.
\end{remark}

\section{Implementation details}\label{app:implementation}
\paragraph{Algorithm.} Algorithm~\ref{alg:coupled} summarizes the simulation of one coupled
path with the backward Euler scheme \eqref{eq:BE}.

\begin{algorithm}[ht]
	\caption{Coupled simulation of $(V^{\eps}(T),V^{0}(T))$ (backward Euler variant)}\label{alg:coupled}
	\begin{algorithmic}[1]
		\Require $\eps$, $T$, $n$ (so $\delta t=T/n$), $\mathbf{A},\mathbf{K},\mathbf{r},\sigma$, 
		$\Delta,\gamma,\ell$, $v_0$
		\State draw $\boldsymbol{\eta}\sim\mathcal N(0,\mathbf C)$;
		set $V^{\eps}\gets v_0$, $V^{0}\gets v_0$, {$\mathbf{c}$}
		$\gets \mathbf{A}^{-\top}\mathbf{r}$, $h\gets0$, $Z^{*}\gets0$, $H^{*}\gets0$
		\For{$m=0,\dots,n-1$}
		\State draw $\boldsymbol{\zeta}\sim\mathcal N(0,I_{d'})$ 
		\Comment{one Gaussian vector drives both processes}
		\State $\delta N^{\eps}\gets\sigma\,\delta t\,\mathbf{r}\cdot\boldsymbol{\eta}/\eps$; \quad
		$\delta N^{0}\gets\sigma\sqrt{\delta t}\,\mathbf{c}^{\top}\mathbf{K}\boldsymbol{\zeta}$; \quad 
		$h\gets h+\delta N^{\eps}-\delta N^{0}$
		\State $V^{\eps}\gets\operatorname{soft}_{\Delta\delta t}(V^{\eps}+\ell\delta t+\delta N^{\eps})/(1+\gamma\delta t)$
		\State $V^{0}\gets\operatorname{soft}_{\Delta\delta t}(V^{0}+\ell\delta t+\delta N^{0})/(1+\gamma\delta t)$
		\State $\boldsymbol{\eta}\gets\boldsymbol{\eta}-(\delta t/\eps^{2})\mathbf{A}\boldsymbol{\eta}+(\sqrt{\delta t}/\eps)\mathbf{K}\boldsymbol{\zeta}$ \Comment{same $\boldsymbol{\zeta}$: this is the coupling}
		\State {$Z^{*}\gets\max\{Z^{*},|V^{\eps}-V^{0}|\}$; \quad $H^{*}\gets\max\{H^{*},|h|\}$} 
		\Comment{{$Z^{*}\le2H^{*}$ by \Cref{prop:discrete}}}
		\EndFor
		\State \Return $(V^{\eps},V^{0}{,Z^{*},H^{*}})$
	\end{algorithmic}
\end{algorithm}

\paragraph{Semi-implicit variant.} 
The update
$V_{m+1}=\operatorname{soft}_{\Delta\delta t}\big((1-\gamma\delta t)V_m+\ell\,\delta t+\delta N_m\big)$,
that is, $V_{m+1}=V_m+\delta N_m+\delta t\,(\bb(V_m)-k_{m+1})$ with
$k_{m+1}\in\partial\varphi(V_{m+1})$, treats the viscous friction explicitly. 
If $\gamma\delta t\le1$, \Cref{prop:discrete} holds for it as well: 
with the notation of its proof,
$\Lambda_{m+1}=(1-\gamma\delta t)\Lambda_m-\gamma\delta t\,h_m-\delta t\,(k^{\eps}_{m+1}-k^{0}_{m+1})$,
and $\Lambda_{m+1}>H_{m+1}$ would imply $k^{\eps}_{m+1}\ge k^{0}_{m+1}$, 
hence $\Lambda_{m+1}\le(1-\gamma\delta t)H_m+\gamma\delta t\,H_m=H_m$, a contradiction. 
The results of Section~\ref{sec:results} are obtained with \eqref{eq:BE}.
	
\paragraph{Time step.} 
We take $\delta t=T/n$ with $n=\lceil T/\min\{10^{-3},\eps^{2}/50\}\rceil$,
so that, for the test problem ($\mathbf{A}=1$), $\delta t\,\mathbf{A}/\eps^{2}\le1/50$ 
for every $\eps$ (for a general matrix $\mathbf{A}$ one takes $\delta t\le\eps^{2}/(50\|\mathbf{A}\|)$):
the OU recursion is stable, it resolves the correlation time in the same way at every $\eps$ for which $\eps^{2}/50\le10^{-3}$ 
(the contraction factor $1-\delta t\mathbf{A}/\eps^{2}$ is then $49/50$ up to the rounding of $n$), 
and in the scalar case its stationary variance exceeds $\mathbf{K}^{2}/(2\mathbf{A})$ 
by the factor $(1-\delta t\mathbf{A}/(2\eps^{2}))^{-1}\le 1.0102$, 
which is the same for all these $\eps$ and therefore does not affect the fitted exponents.

\paragraph{Random numbers and standard errors.} 
Batch $\mathfrak{B}$ of the cell (stage, regime,~$\eps$) uses the PCG64 generator \cite{ONe14} of NumPy, 
seeded by \texttt{SeedSequence}$(42424242$, \texttt{spawn\_key}$=($stage, regime, $\mathrm{round}(10^{6}\eps)$, $\mathfrak{B}))$, 
so that all streams are independent and the results are reproducible. 
With $s(\cdot)$ the sample standard deviation over the $N$ paths of a cell, 
the standard errors are $s((D^{\eps})^{2})N^{-1/2}$ for $\mathcal D_2$; 
$s(F^{\eps})N^{-1/2}$, $s(D^{\eps})N^{-1/2}$ and $\widehat\sigma_{K}N^{-1/2}$ for $\widehat{\mathrm{I}}^{\eps}_N$, 
$\widehat J^{\eps}_N$ and $\widehat K^{\eps}_N$, 
where $\widehat\sigma_{K}^{2}=s(F^{\eps})^{2}-\widehat\lambda^{\eps}_N\,\widehat{\Cov}(F^{\eps},F^{0})$; 
and $s(D^{\eps})N^{-1/2}$ for the bias $\E [D^{\eps}]$. 
Variances, $\widehat\lambda^{\eps}_N$ and the variance ratios are computed from all $N$ paths; 
their standard errors are $s_{\mathfrak{B}}/\sqrt{\mathfrak{B}}$, 
with $s_{\mathfrak{B}}$ the standard deviation of the $\mathfrak{B}=12$ batch estimates.

\paragraph{Local slopes and fitted exponents.} 
For consecutive grid values $\eps_i>\eps_{i+1}$, the local slope
\[
r_i \triangleq \frac{\log\mathcal D_2(\eps_{i+1})-\log\mathcal D_2(\eps_i)}{\log\eps_{i+1}-\log\eps_i}
\]
is plotted at $\bar\eps_i \triangleq (\eps_i\eps_{i+1})^{1/2}$. 
Since the cells are independent, its
standard error is approximately $(e_i^{2}+e_{i+1}^{2})^{1/2}/|\log(\eps_{i+1}/\eps_i)|$, 
where $e_i$ is the relative standard error of $\mathcal D_2(\eps_i)$. 
The fitted exponent is the weighted least-squares slope of $\log\mathcal D_2(\eps_i)$ 
against $\log\eps_i$ over the $k$ grid values $\eps_i\le 0.05$, with weights $e_i^{-2}$. 
Its standard error is $\big(\sum_i e_i^{-2}(\log\eps_i-\overline{\log\eps})^{2}\big)^{-1/2}$, 
where $\overline{\log\eps}$ is the weighted mean of the $\log\eps_i$, 
multiplied by $(\chi^{2}/(k-2))^{1/2}$ when this factor exceeds $1$, 
$\chi^{2}$ being the weighted residual sum of squares. 
The exponents of the coupling errors and of the bias are fitted in the same way.

\paragraph{Reference values.} 
By It\^o's formula and Lemma~\ref{lem:nostick}, 
the laws of $V^{0}(t)$, $t\in[0,T]$, form a weak solution of the Fokker--Planck equation
\begin{equation}\label{eq:FP}
	\partial_t p=\partial_v\big[(\gamma v+\Delta\sgn(v)-\ell)\,p\big]+\frac{\mathfrak{s}^{2}}{2}\,\partial_v^{2}p,
	\qquad p(0,\cdot)=\delta_{v_0},
\end{equation}
which is \cite[(1.4)]{TVJ10} with noise power $\mathfrak{s}^{2}$ and external force $\ell$.
In regime (I), $p_T$ is given by \cite[(2.10)]{TVJ10}, that is, by \eqref{eq:pI}.
In regimes (II)--(III), it is evaluated from the spectral series \cite[(3.12), (4.7)]{TVJ10},
truncated at $40$ modes and converted to physical variables. 

The eigenvalues are bracketed on a grid of step $0.05$ and refined by Brent's method. 
The parabolic cylinder functions $D_\nu$ are evaluated with SciPy \cite{Vir20}; 
since the library routine loses accuracy for orders slightly above a positive integer, 
$D_\nu$ is obtained by five-point Lagrange interpolation in $\nu$ when $\nu$ lies within $10^{-4}$ of an integer.

Since the completeness of these eigenfunction expansions is not proved in \cite{TVJ10}, 
the truncated series are cross-validated against three independent computations: 
(i) the numerical inversion, by Talbot's method \cite{Tal79} as in \cite[Section~2]{TPJ12}, 
of the closed-form Laplace transform \cite[(7)--(9)]{TPJ12}, 
evaluated with \texttt{mpmath} in $30$-digit arithmetic at the test points $v\in\{-2,-0.5,0.25,1,2.5\}$;  
(ii) a finite-volume solution of \eqref{eq:FP} on $[-20,20]$ with mesh size $0.0025$ 
and a cell centred at $0$, Scharfetter--Gummel fluxes \cite{SG69}, zero-flux boundary conditions, 
and implicit Euler steps $4\cdot10^{-4}$ and $2\cdot10^{-4}$ combined by Richardson extrapolation; 
(iii) and the sample means of $f(V^{0}_n)$ in all cells of the experiment. 
Quadratures are split at the non-smooth points of the integrand ($0$, $\pm v_f$ and $\pm(v_f+1)$ for the severity family); 
for the finite-volume values, the density is interpolated by cubic splines on each side of $0$. 
Table~\ref{tab:ref} reports the resulting reference values, the discrepancies, and $\sup p_T$,
which the bound \eqref{eq:M} exceeds by a factor between $2.0$ and $2.7$; 
the inequality $p_T\le\bar p_T$ of \Cref{prop:density} was also checked on a grid of $[-10,10]$.

\begin{table}[H]
\centering
\caption{Reference values $\mathrm{I}^{0}=\E[f(V^{0}(T))]$ for the seven test observables in regimes (I)--(III), 
and checks of the density $p_T$: 
the change of $\mathrm{I}^{0}$ (maximum over the fifteen observables) from $30$ to $40$ modes; 
the largest discrepancies of $p_T$ with \cite[(2.10)]{TVJ10} on a grid of $[-6,6]$, 
and with the inverted Laplace transform of \cite{TPJ12} and the finite-volume solution at the five test points; 
the largest discrepancy between $\mathrm{I}^{0}$ and its finite-volume value over the fifteen observables; 
the normalization; and $\sup p_T$ against the bound \eqref{eq:M}.}\label{tab:ref}
\begin{tabular}{@{}lccc@{}}
	\toprule
	& regime (I) & regime (II) & regime (III)\\
	\midrule
	\multicolumn{4}{@{}l}{$\mathrm I^{0}=\E[f(V^{0}(T))]$}\\
	$f_{\rm ind}$ & $0.05809535$ & $0.00769924$ & $0.01339634$\\
	$f_{1/4}$ & $0.04442633$ & $0.00507531$ & $0.00908520$\\
	$f_{1/3}$ & $0.04121140$ & $0.00449086$ & $0.00811278$\\
	$f_{1/2}$ & $0.03606222$ & $0.00358813$ & $0.00659817$\\
	$f_{3/4}$ & $0.03053359$ & $0.00267426$ & $0.00504382$\\
	$f_{1}$ & $0.02666738$ & $0.00207778$ & $0.00401286$\\
	$f_{\rm sm}$ & $0.19463284$ & $0.10696800$ & $0.12412701$\\
	\midrule
	\multicolumn{4}{@{}l}{checks of the density $p_T$}\\
	modes of the spectral series & -- & $40$ & $40$\\
	change of $\mathrm I^{0}$ from $30$ to $40$ modes & -- & $8.7\cdot10^{-19}$ & $1.4\cdot10^{-17}$\\
	\eqref{eq:pI} against \cite[(2.10)]{TVJ10} & $1.1\cdot10^{-16}$ & -- & --\\
	Laplace inversion \cite{TPJ12} & -- & $6.1\cdot10^{-16}$ & $2.2\cdot10^{-16}$\\
	finite volumes, $\max|p_T-p_T^{\rm FV}|$ & $2.3\cdot10^{-7}$ & $3.9\cdot10^{-7}$ & $4.2\cdot10^{-7}$\\
	finite volumes, $\max_f|\mathrm I^{0}-\mathrm I^{0}_{\rm FV}|$ & $1.6\cdot10^{-7}$ & $9.8\cdot10^{-8}$ & $9.8\cdot10^{-8}$\\
	$\int_\R p_T\,\dd v-1$ & $0$ & $-3.3\cdot10^{-16}$ & $-1.1\cdot10^{-16}$\\
	$\sup_v p_T(v)$ & $0.584$ & $0.768$ & $0.718$\\
	bound \eqref{eq:M} on $M$ & $1.187$ & $2.083$ & $1.838$\\
	\bottomrule
\end{tabular}
\end{table}
\end{document}